\let\csname ver@amsthm.sty\endcsname\relax
\numberwithin{equation}{section}
\newtheorem{thm}{Theorem}[section]
\newtheorem{lemma}[thm]{Lemma}
\newtheorem{Definition}[thm]{Definition}
\newtheorem{Example}[thm]{Example}
\newenvironment{example}
  {\begin{Example}\rm}{\end{Example}}
\newtheorem{Remark}[thm]{Remark}
\newenvironment{remark}
  {\begin{Remark}\rm}{\end{Remark}}
\crefname{thm}{Theorem}{Theorems}
\crefname{lemma}{Lemma}{Lemmas}
\crefname{cor}{Corollary}{Corollaries}
\crefname{prop}{Proposition}{Propositions}
\crefname{definition}{Definition}{Definitions}
\crefname{example}{Example}{Examples}
\crefname{remark}{Remark}{Remarks}
\newcommand{\dfn}[1]{\textcolor{blue}{\emph{#1}}}
\newcommand{\mch}[2]{
\left.\mathchoice
  {\left(\kern-0.48em\binom{#1}{#2}\kern-0.48em\right)}
  {\big(\kern-0.30em\binom{\smash{#1}}{\smash{#2}}\kern-0.30em\big)}
  {\left(\kern-0.30em\binom{\smash{#1}}{\smash{#2}}\kern-0.30em\right)}
  {\left(\kern-0.30em\binom{\smash{#1}}{\smash{#2}}\kern-0.30em\right)}
\right.}
\title[Combinatorial reciprocity for non-intersecting paths]{Combinatorial reciprocity \\ for non-intersecting paths}
\author{Sam Hopkins}
\email{samuelfhopkins@gmail.com}
\address{Department of Mathematics, Howard University, Washington, DC, USA}
\author{Gjergji Zaimi}
\email{gjergjiz@gmail.com}
\address{Palo Alto, California, USA}
\keywords{Combinatorial reciprocity, non-intersecting paths, Lindstr\"{o}m--Gessel--Viennot lemma, Dyck paths, plane partitions, Schur functions, symmetric functions}
\subjclass[2020]{05A19, 05A15, 05E05}
\begin{document}

\begin{abstract}
We prove a combinatorial reciprocity theorem for the enumeration of non-intersecting paths in a linearly growing sequence of acyclic planar networks. We explain two applications of this theorem: reciprocity for fans of bounded Dyck paths, and reciprocity for Schur function evaluations with repeated values.
\end{abstract}

\maketitle

\section{Introduction}

Suppose that $\mathcal{A}$ is a family of combinatorial objects and $f(n)$ counts the number of $\mathcal{A}$-objects of size $n$ for all $n \geq 0$. A \dfn{combinatorial reciprocity} theorem asserts that $f(-n) = \pm g(n)$ for all $n \geq 0$, where $g(n)$ counts the number of $\mathcal{B}$-objects of size $n$ for some other, related family of combinatorial objects $\mathcal{B}$. Combinatorial reciprocity is a hidden duality which certain enumeration problems possess. The  prototypical example of combinatorial reciprocity is
\begin{equation} \label{eqn:ur_reciprocity}
    \binom{-n}{k} = (-1)^k \mch{n}{k},
\end{equation} 
where on the left the binomial coefficient $\binom{n}{k}$ of course counts the number of $k$-element subsets of $[n] \coloneqq \{1,2,\ldots,n\}$, and on the right the multichoose number~$\mch{n}{k}$ counts the number of $k$-element multisets on $[n]$. For a survey of the many other known examples of combinatorial reciprocity, consult Stanley's classic paper~\cite{stanley1974combinatorial} or the book by Beck and Sanyal~\cite{beck2018combinatorial}.

Here we prove a combinatorial reciprocity theorem for the enumeration of non-intersecting tuples of paths in a linearly growing sequence of acyclic planar networks. 

However, before we discuss non-intersecting paths, let us take a moment to highlight one subtlety regarding our definition of combinatorial reciprocity: it refers to~$f(-n)$, when the counting function $f$ is a priori defined only at nonnegative integers. Thus, to establish a combinatorial reciprocity result, we also have to supply a sensible way to evaluate our counting function at negative integers. For~\eqref{eqn:ur_reciprocity}, we do this by realizing that
\[ \binom{n}{k} = \frac{n(n-1)\ldots(n-k+1)}{k!}\]
is in fact a polynomial in $n$. This polynomial can then be evaluated at negative integers to give a meaning to $\binom{-n}{k}$. 

For many combinatorial reciprocity theorems, the relevant counting functions are polynomials. For instance, this is the case with Ehrhart-Macdonald reciprocity for Ehrhart polynomials of lattice polytopes~\cite{ehrhart1967demonstration, macdonald1971polynomials}. In some cases, however, the counting function~$f(n)$ may not be given by a polynomial in $n$, but we can still make sense of $f(-n)$ and prove an interesting reciprocity result. For example, combinatorial reciprocity has been studied for certain perfect matching enumeration problems where the counting function has terms that are exponential in $n$~\cite{propp2001reciprocity, speyer2001reciprocity}. A broad class of functions which are a priori only defined only for nonnegative integers but can be extended to negative integers, including both polynomials like in~\cite{ehrhart1967demonstration, macdonald1971polynomials} and exponential functions like in~\cite{propp2001reciprocity, speyer2001reciprocity}, are those satisfying a linear recurrence.\footnote{An even more general class of functions than those satisfying a linear recurrence are the $P$-recursive functions, and Stanley has studied reciprocity for $P$-recursive functions in~\cite{stanley1980differentiably}.}

As mentioned, in this paper we prove a combinatorial reciprocity theorem for the enumeration of non-intersecting paths in a linearly growing network. Specifically, we fix an acyclic planar network $G$ and let $G^n$ be the network obtained by gluing $n$ copies of $G$ together. We show the function of $n$ which counts the number of non-intersecting paths in $G^n$ connecting a given pattern of sources to sinks satisfies a linear recurrence. Furthermore, assuming one more technical condition on the network $G$, we show that  the evaluation of this function at $-n$ yields the number of non-intersecting paths in~$G^n$ connecting the complementary pattern of sources to sinks. In fact, our result allows the network to have edge weights and hence considers the weighted enumeration of paths. See \cref{thm:main} for the exact statement.

Non-intersecting paths occupy a central role in modern enumerative combinatorics, mainly for two reasons. The first reason is that the Lindstr\"{o}m--Gessel--Viennot (LGV) lemma~\cite{lindstrom1973vector, gessel1989determinants} gives a determinantal formula for their enumeration. Unsurprisingly, the LGV lemma is a key input to our proof of the reciprocity theorem. Indeed, beyond the LGV lemma, the proof is linear algebra.

The second reason non-intersecting paths are ubiquitous in modern combinatorics is that they are remarkably flexible objects, which can represent many other kinds of discrete structures such as plane partitions, tableaux, et cetera. Thus, our combinatorial reciprocity result specializes to reciprocity statements for these various other structures. We discuss in detail two specific applications of our main result: reciprocity for fans of bounded Dyck paths, and reciprocity for Schur function evaluations with repeated values. The reciprocity result for fans of bounded Dyck paths was recently obtained by Cigler and Krattenthaler~\cite{cigler2020bounded} (see also the follow-up work by Jang et al.~\cite{jang2022negative}). Meanwhile, the reciprocity result for Schur functions is also a known result in the context of symmetric function theory: see~\cite[Exercise 24]{stanley2022ec2supp}. Our present work gives a new graphical approach to these results. 

The rest of the paper is structured as follows: in \cref{sec:main} we prove the main reciprocity result for non-intersecting paths, and in \cref{sec:apps} we discuss the aforementioned applications to fans of bounded Dyck paths and Schur function evaluations with repeated values.

\subsection*{Acknowledgments} This research originated on MathOverflow~\cite{MO}, where the first author asked a question about reciprocity for fans of bounded Dyck paths which the second author answered. We thank Johann Cigler, whose prior MathOverflow questions about bounded Dyck paths were thus crucial to the genesis of our collaboration (see also the introduction of~\cite{cigler2020bounded} for more on the history of Cigler's investigations). We also thank Richard Stanley for pointing out~\cite[Exercise 24]{stanley2022ec2supp} to us. Finally, we thank the anonymous referee for their attention to our manuscript.

\section{Main result} \label{sec:main}

Throughout we work over the complex numbers $\mathbb{C}$ for convenience and concreteness. Thus all matrices have entries in $\mathbb{C}$, all functions are valued in $\mathbb{C}$, and so on. It is reasonable to ask how much of our work extends from $\mathbb{C}$ to other commutative rings, but we do not pursue such questions here.

A \dfn{planar network} $G$ consists of the following data:
\begin{itemize}
    \item a finite directed graph $G=(V,E)$ drawn in a disc in a planar manner (i.e., with edges intersecting only at vertices);
    \item a choice of $m$ distinguished \dfn{source vertices} $s_1,\ldots,s_m \in V$ of indegree zero and $m$ distinguished \dfn{sink vertices} $t_1,\ldots,t_m \in V$ of outdegree zero that are all on the boundary of the disc, and which occur in the order $s_1$, $s_2$, ..., $s_m$, $t_m$, $t_{m-1}$, ..., $t_1$ as we traverse the boundary clockwise;
    \item an \dfn{edge weight} $w(e) \in \mathbb{C}$ for each edge $e \in E$.
\end{itemize}
The planar network $G$ is \dfn{acyclic} if it contains no directed cycles. From now on in this section we fix an acyclic planar network $G$. To avoid degeneracies we assume that the sources and sinks of $G$ are disjoint.

A \dfn{path} in $G$ is a sequence $\pi = v_0, e_1, v_1, \ldots, v_{\ell-1}, e_{\ell}, v_{\ell}$ of vertices $v_0,\ldots,v_{\ell} \in V$ and edges $e_1,\ldots,e_{\ell} \in E$ such that $e_i = (v_{i-1},v_i)$ for all~$i=1,\ldots,\ell$. (That $G$ is acyclic guarantees that all vertices and all edges in $\pi$ are distinct.) We say that this path is \dfn{from~$v_0$ to~$v_{\ell}$} and write $\pi\colon v_0 \to v_{\ell}$. The \dfn{weight} of such a path is
\[ w(\pi) \coloneqq \prod_{i=1}^{\ell} w(e_i),\]
where by convention this product is $1$ if $\ell=0$.

Now let $\Pi=(\pi_1,\ldots,\pi_k)$ be a tuple of paths in $G$. The \dfn{weight} of such a $\Pi$ is 
\[ w(\Pi) = \prod_{i=1}^{k} w(\pi),\]
where again this product is $1$ if $k=0$. We write $\Pi\colon(u_1,\ldots,u_k) \to (v_1,\ldots,v_k)$ to mean $\pi_i\colon u_i \to v_i$ for all $i=1,\ldots,k$. We say that $\Pi$ is \dfn{non-intersecting} if paths $\pi_i$ and $\pi_j$ do not have any vertices in common for all $i \neq j$. 

The \dfn{path matrix} $\mathsf{P}_G$ of $G$ is the $m \times m$ matrix whose $i,j$ entry, for $1 \leq i,j \leq m$, is
\[ (\mathsf{P}_G)_{i,j} \coloneqq \sum_{\pi}w(\pi),\]
a sum over paths $\pi\colon s_i \to t_j$ in $G$. (That $G$ is acyclic guarantees that this sum is finite.) If there are no paths $\pi\colon s_i \to t_j$ then $(\mathsf{P}_G)_{i,j}=0$.

Let $\mathsf{M}$ be an $m \times m$ matrix. We use $\det(\mathsf{M})$ to denote the determinant of $\mathsf{M}$ (which is $1$ if $m=0$). For two $k$-element subsets $I,J\subseteq [m]$, we use $\mathsf{M}[I,J]$ to denote the~$k \times k$ matrix obtained from $\mathsf{M}$ by restricting to rows~$I$ and columns~$J$. The famous \dfn{Lindstr\"{o}m--Gessel--Viennot (LGV) lemma}~\cite{lindstrom1973vector, gessel1989determinants} is the following:

\begin{lemma} [Lindstr\"{o}m--Gessel--Viennot lemma] \label{lem:LGV}
For any $I=\{i_1 < i_2 <\ldots < i_k\}$, $J=\{j_1 < j_2 < \cdots < j_k\} \subseteq [m]$, we have
\[ \det (\mathsf{P}_G[I,J]) = \sum_{\Pi} w(\Pi),\]
a sum over non-intersecting tuples of paths $\Pi\colon (s_{i_1},\ldots,s_{i_k}) \to (t_{j_1},\ldots,t_{j_k})$ in $G$. 
\end{lemma}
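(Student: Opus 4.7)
The plan is to follow the classical sign-reversing involution argument of Lindström, Gessel, and Viennot. First I would expand the determinant as
\[
\det(\mathsf{P}_G[I,J]) = \sum_{\sigma \in S_k} \mathrm{sgn}(\sigma) \prod_{\ell=1}^{k} (\mathsf{P}_G)_{i_\ell,\, j_{\sigma(\ell)}},
\]
and then unfold each entry $(\mathsf{P}_G)_{i_\ell,\,j_{\sigma(\ell)}}$ using its definition as a weighted sum over paths $s_{i_\ell} \to t_{j_{\sigma(\ell)}}$. Distributing, the right-hand side becomes
\[
\sum_{\sigma \in S_k} \mathrm{sgn}(\sigma) \sum_{\Pi} w(\Pi),
\]
where the inner sum ranges over all path tuples $\Pi = (\pi_1,\ldots,\pi_k)$ with $\pi_\ell\colon s_{i_\ell} \to t_{j_{\sigma(\ell)}}$, intersecting or not.

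Next I would construct a sign-reversing, weight-preserving involution $\Phi$ on the set of pairs $(\sigma,\Pi)$ for which $\Pi$ is intersecting. Given such a pair, use the acyclicity of $G$ and a fixed total order on $V$ to pick the lexicographically smallest pair of indices $i<j$ such that $\pi_i$ and $\pi_j$ share a vertex, and within this pair pick the first shared vertex $v$ along $\pi_i$. Swap the tails of $\pi_i$ and $\pi_j$ after $v$, producing a new tuple $\Pi'$ with the same multiset of edges (hence the same weight) but with the endpoints of $\pi_i$ and $\pi_j$ swapped. Composing $\sigma$ with the transposition $(i\,j)$ gives a new permutation $\sigma'$, so $(\sigma',\Pi')$ is again in our domain, $\Phi^2 = \mathrm{id}$, and $\mathrm{sgn}(\sigma') = -\mathrm{sgn}(\sigma)$. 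All intersecting contributions cancel in pairs.

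The terms that survive are pairs $(\sigma,\Pi)$ where $\Pi$ is non-intersecting. Here is the step where planarity is essential, and I expect it to be the main (though standard) obstacle: one must argue that any non-intersecting tuple $\Pi$ realizing a permutation $\sigma$ of the sink labels forces $\sigma = \mathrm{id}$. This uses the fact that the sources and sinks lie on the boundary of the disc in the cyclic order $s_1,\ldots,s_m,t_m,\ldots,t_1$; any non-identity $\sigma$ contains an inversion, and the corresponding two paths, drawn in the disc between boundary points in a crossing configuration, would have to share a vertex by a planar topological (Jordan-curve) argument, contradicting non-intersection. Hence only $\sigma = \mathrm{id}$ contributes, with sign $+1$, and the determinant collapses to $\sum_{\Pi} w(\Pi)$ summed over non-intersecting $\Pi\colon (s_{i_1},\ldots,s_{i_k}) \to (t_{j_1},\ldots,t_{j_k})$, as required.
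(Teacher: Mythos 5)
The paper itself does not prove this lemma; it cites Lindstr\"{o}m and Gessel--Viennot and points the reader to Stanley's EC1, Theorem~2.7.1, so you are supplying the standard argument from scratch. Your overall strategy is the right one, and both the determinant expansion and the final planarity step (interleaved boundary endpoints force two paths realizing an inversion of $\sigma$ to share a vertex, so only $\sigma=\mathrm{id}$ survives, with sign $+1$) are correct. The gap is that the tail-swapping map you define is not an involution, and the involution property is the crux of the whole cancellation. Your rule selects the lexicographically smallest intersecting pair $(i,j)$ and then the first vertex $v$ of $\pi_i$ lying on $\pi_j$. After swapping tails at $v$, the new path $\pi_i'$ inherits the old tail of $\pi_j$, and that tail may meet a third path $\pi_{j'}$ with $i<j'<j$ that previously met $\pi_j$ but not $\pi_i$. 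Concretely, suppose $\pi_1\cap\pi_2=\emptyset$, $\pi_1\cap\pi_3=\{v\}$, and $\pi_2\cap\pi_3=\{w\}$ with $w$ occurring after $v$ on $\pi_3$. Your rule selects the pair $(1,3)$ and swaps at $v$; afterwards $\pi_1'$ contains $w$, so $(1,2)$ is now an intersecting pair and is lexicographically smaller than $(1,3)$. Applying your rule to the new configuration therefore swaps at $w$ between $\pi_1'$ and $\pi_2$ rather than undoing the first swap, so $\Phi^2\neq\mathrm{id}$ and the claimed pairwise cancellation fails.

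The standard repair is to anchor the choice to a single path and a single vertex rather than to a pair: let $i$ be the least index such that $\pi_i$ meets some other path, let $v$ be the first vertex along $\pi_i$ lying on some other path, and let $j$ be the least index distinct from $i$ with $v\in\pi_j$; then swap tails at $v$. One checks that all three choices are reproduced after the swap: the union of all path vertices is unchanged, so $i$ remains the least index of a path meeting another; the prefix of $\pi_i'$ before $v$ equals that of $\pi_i$ and still avoids every other path (it cannot meet the tail transplanted into $\pi_j'$, since that tail was part of $\pi_i$ itself and $G$ is acyclic); and the set of paths through $v$ is unchanged. With that modification your argument goes through. A small further point: no auxiliary total order on $V$ is needed, since acyclicity already guarantees that the vertices along a path are distinct, so ``first shared vertex along $\pi_i$'' is well defined by the order of traversal.
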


See also \cite[Theorem~2.7.1]{stanley2012ec1} and \cite[Theorem~3.1.14]{ardila2015algebraic} for discussion of the LGV lemma. Sometimes the LGV lemma refers to a more general result where the network need not be planar and with the sinks and sources arranged in the manner we have required. But for other networks the tuples of paths we sum over will come with signs in addition to weights. Our conditions guarantee that all signs are positive, because non-intersecting paths must connect source $s_{i_\ell}$ to sink $t_{j_\ell}$. 

\begin{figure}
    \begin{tikzpicture}
    \draw[very thick, black, fill=blue!25] (0,0) ellipse (0.5 and 1.5);
    \node[inner sep=1,fill=red,draw=red,circle,label=left:{$s_1$}] at (-0.37,-1) {};
    \node[inner sep=1,fill=red,draw=red,circle,label=left:{$s_2$}] at (-0.47,-0.5) {};
    \node[label=left:{$\vdots$}] at (-0.5,0.2) {};
    \node[inner sep=1,fill=red,draw=red,circle,label=left:{$s_m$}] at (-0.45,0.75) {};
    \node[inner sep=1,fill=red,draw=red,circle,label=left:{\tiny $t_1$}] at (0.37,-1) {};
    \node[inner sep=1,fill=red,draw=red,circle,label=left:{\tiny $t_2$}] at (0.47,-0.5) {};
    \node[label=right:{$\vdots$}] at (0.5,0.2) {};
    \node[inner sep=1,fill=red,draw=red,circle,label=left:{\tiny $t_m$}] at (0.45,0.75) {};
    \node[label={$G$}] at (0,-0.3) {};
    \draw[very thick, black, fill=blue!25] (1.6,0) ellipse (0.5 and 1.5);
    \node[inner sep=1,fill=red,draw=red,circle,label={[xshift=-0.125 cm]right:{\tiny $s_1$}}] at (1.6+-0.37,-1) {};
    \node[inner sep=1,fill=red,draw=red,circle,label={[xshift=-0.125 cm]right:{\tiny $s_2$}}] at (1.6+-0.47,-0.5) {};
    \node[inner sep=1,fill=red,draw=red,circle,label={[xshift=-0.125 cm]right:{\tiny $s_m$}}] at (1.6+-0.45,0.75) {};
    \node[label={$G$}] at (1.6+0,-0.3) {};
    \draw[red, decorate, decoration=snake] (0.37,-1) to (1.6+-0.37,-1);
    \draw[red, decorate, decoration=snake] (0.47,-0.5) to (1.6+-0.47,-0.5);
    \draw[red, decorate, decoration=snake] (0.45,0.75) to (1.6+-0.45,0.75);
    \node[inner sep=1,fill=red,draw=red,circle,label={[xshift=0.125 cm]left:{\tiny $t_1$}}] at (1.6+0.37,-1) {};
    \node[inner sep=1,fill=red,draw=red,circle,label={[xshift=0.125 cm]left:{\tiny $t_2$}}] at (1.6+0.47,-0.5) {};
    \node[inner sep=1,fill=red,draw=red,circle,label={[xshift=0.125 cm]left:{\tiny $t_m$}}] at (1.6+0.45,0.75) {};
    \draw[red, decorate, decoration=snake] (1.6+0.37,-1) to (1.6+1,-1);
    \draw[red, decorate, decoration=snake] (1.6+0.47,-0.5) to (1.6+1,-0.5);
    \draw[red, decorate, decoration=snake] (1.6+0.45,0.75) to (1.6+1,0.75);
    \node at (3,0) {\huge $\cdots$};
    \draw[very thick, black, fill=blue!25] (4.3,0) ellipse (0.5 and 1.5);
    \node[inner sep=1,fill=red,draw=red,circle,label=right:{\tiny $s_1$}] at (4.3+-0.37,-1) {};
    \node[inner sep=1,fill=red,draw=red,circle,label=right:{\tiny $s_2$}] at (4.3+-0.47,-0.5) {};
    \node[inner sep=1,fill=red,draw=red,circle,label=right:{\tiny $s_m$}] at (4.3+-0.45,0.75) {};
    \draw[red, decorate, decoration=snake] (4.3-1,-1) to (4.3+-0.37,-1);
    \draw[red, decorate, decoration=snake] (4.3-1,-0.5) to (4.3+-0.47,-0.5);
    \draw[red, decorate, decoration=snake] (4.3-1,0.75) to (4.3+-0.45,0.75);
    \node[label={$G$}] at (4.3+0,-0.3) {};
    \node[inner sep=1,fill=red,draw=red,circle,label=right:{$t_1$}] at (4.3+0.37,-1) {};
    \node[inner sep=1,fill=red,draw=red,circle,label=right:{$t_2$}] at (4.3+0.47,-0.5) {};
    \node[label=right:{$\vdots$}] at (4.3+0.5,0.2) {};
    \node[inner sep=1,fill=red,draw=red,circle,label=right:{$t_m$}] at (4.3+0.45,0.75) {};
    \end{tikzpicture}
    \caption{The graph $G^n$ consisting of $n$ copies of $G$ glued together. The wavy red lines denote identification of vertices.}
    \label{fig:gn}
\end{figure}
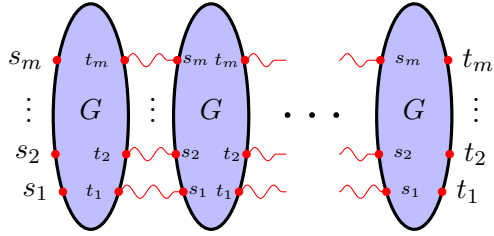

Next, we define a linearly growing sequence $G^{0}, G^{1}, G^{2}, \ldots$ of acyclic planar networks, where $G^n$ consists of $n$ copies of $G$ glued together. More precisely, we let~$G^0$ be the degenerate network which has $m$ vertices $s_1 = t_1, s_2 = t_2, \ldots, s_m = t_m$ and no edges. For $n \geq 1$, we let network $G^n$ be obtained from network $G^{n-1}$ by appending a copy of~$G$ to $G^{n-1}$, while identifying the sinks $t_1,\ldots,t_m$ in $G^{n-1}$ with the sources $s_1,\ldots,s_m$ in this new copy of $G$. Except in the case $n=1$, these vertices become internal -- i.e., neither sources nor sinks -- after this identification. See~\cref{fig:gn} for a diagrammatic representation of $G^n$. It is clear that $G^n$ remains planar (with sinks and sources arranged appropriately) and acyclic. Also, notice that $G^1=G$.

The final thing we need to do before we can state our main result is review what it means for a function to satisfy a linear recurrence, and explain how such a function naturally extends to negative inputs. Let $f\colon \mathbb{N} \to \mathbb{C}$ be a function defined on the nonnegative integers $\mathbb{N}$. We say that~$f$ satisfies a \dfn{(homogeneous) linear recurrence (with constant coefficients)} if there exists a~$d \geq 0$ and $\alpha_1, \alpha_2, \ldots, \alpha_d \in \mathbb{C}$ with~$\alpha_d \neq 0$ such that
\[ f(n+d) + \alpha_1 f(n+d-1) + \alpha_2 f(n+d-2) + \cdots + \alpha_d f(n) = 0\]
for all $n \geq 0$. Such an $f$ is determined by this recurrence together with the initial values $f(0),f(1),\ldots,f(d-1)$. Thus, for such an $f$, we can define the value of $f$ at negative integers by ``running the recurrence backwards'' from the same $d$ initial values. That is, for all $n\geq 1$, we set 
\[ f(-n) \coloneqq \frac{-1}{\alpha_d} \big(f(-n+d) + \alpha_1 f(-n+d-1) + \cdots + \alpha_{d-1} f(-n+1)\big). \]

\begin{remark} \label{rem:neg_vals}
It is well known (see, e.g.,~\cite[Theorem 4.1.1(i)]{stanley2012ec1}) that $f\colon \mathbb{N} \to \mathbb{C}$ satisfying a linear recurrence is equivalent to the generating function
\[ F(x) \coloneqq \sum_{n \geq 0} f(n) \; x^n\] being a rational function $F(x) = P(x)/Q(x)$ where the degree of the polynomial~$P(x) \in \mathbb{C}[x]$ is strictly less than that of $Q(x) \in \mathbb{C}[x]$. We may equivalently (see~\cite[Proposition 4.2.3]{stanley2012ec1}) define the value of such an $f$ at negative integers by setting
\[ \sum_{n \geq 1} f(-n) \; x^{n} \coloneqq -F(x^{-1}),\]
an equality of rational functions. Note that this generating function perspective makes it clear that if $f$ satisfies multiple linear recurrences, it does not matter which we use to extend $f$ to negative integers: they must all give the same result.

It is also well known (see, e.g.,~\cite[Theorem 4.1.1(iii)]{stanley2012ec1}) that such an $f$ is a combination of polynomials and exponential functions in the sense that there exist polynomials $P_1(x),\ldots,P_k(x) \in \mathbb{C}[x]$ and complex numbers $\gamma_1,\ldots,\gamma_k \in \mathbb{C} \setminus \{0\}$ for which
\[f(n) = P_1(n) \; \gamma_1^n + \cdots + P_k(n) \; \gamma_k^n\]
for all $n \geq 0$. We may again equivalently define $f$ at negative integers by naively plugging $-n$ into this formula.
\end{remark}

Let $I\subseteq [m]$ be any subset. We use $\sigma(I) \coloneqq \sum_{i \in I} i$ to denote the sum of the elements in~$I$, and $I^c \coloneqq [m]\setminus I$ to denote the complementary subset. Our main result is the following:

\begin{thm} \label{thm:main}
Suppose that $\det(\mathsf{P}_G) \neq 0$. For subsets $I=\{i_1 < i_2 <\ldots < i_k\}$, $J=\{j_1 < j_2 < \cdots < j_k\} \subseteq [m]$ and $n \geq 0$, define
\[ f_G(I,J;n) \coloneqq  \sum_{\Pi} w(\Pi), \]
a sum over non-intersecting tuples of paths $\Pi \colon (s_{i_1},\ldots,s_{i_k}) \to (t_{j_1},\ldots,t_{j_k})$ in $G^n$. Then, for fixed $I$ and $J$, $f_G(I,J;n)$ satisfies a linear recurrence as a function of~$n$. Moreover, suppose that $\det(\mathsf{P}_G) = 1$. Then for all $n \geq 1$,
\[ f_G(I,J;-n) = (-1)^{\sigma(I) + \sigma(J)} \; f_G(J^c,I^c;n).\]
\end{thm}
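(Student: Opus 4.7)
The plan is to reduce both claims to classical linear algebra, combining the LGV lemma with Jacobi's identity for minors of the inverse matrix. The starting observation is that gluing copies of $G$ corresponds to multiplying path matrices: a path in $G^n$ from $s_i$ in the first copy to $t_j$ in the last copy decomposes uniquely into $n$ successive pieces (one per copy of $G$, some possibly trivial) through the identified boundary vertices, so $\mathsf{P}_{G^n} = \mathsf{P}_G^n$. Combined with \cref{lem:LGV} applied to $G^n$, this yields the key reformulation
\[ f_G(I,J;n) = \det(\mathsf{P}_G^n[I,J]). \]

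For the linear recurrence, I would invoke the $k$-th compound matrix $\mathsf{C}_k(\mathsf{M})$, whose entries are the $k \times k$ minors of $\mathsf{M}$ and which satisfies $\mathsf{C}_k(\mathsf{M}\mathsf{N}) = \mathsf{C}_k(\mathsf{M})\mathsf{C}_k(\mathsf{N})$ by the Cauchy--Binet formula. Then $f_G(I,J;n)$ is an entry of the matrix power $\mathsf{C}_k(\mathsf{P}_G)^n$, and Cayley--Hamilton applied to $\mathsf{C}_k(\mathsf{P}_G)$ produces a linear recurrence whose constant term is, up to sign, $\det(\mathsf{C}_k(\mathsf{P}_G)) = \det(\mathsf{P}_G)^{\binom{m-1}{k-1}}$ by Sylvester's identity. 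This is nonzero under our hypothesis, so the recurrence has the form required by the paper.

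For the reciprocity, the central tool is Jacobi's classical minor identity: for any invertible $m \times m$ matrix $\mathsf{M}$ and subsets $I,J \subseteq [m]$ of equal size,
\[ \det(\mathsf{M}^{-1}[I,J]) = (-1)^{\sigma(I)+\sigma(J)} \frac{\det(\mathsf{M}[J^c,I^c])}{\det(\mathsf{M})}. \]
Applied with $\mathsf{M} = \mathsf{P}_G^n$, whose determinant is $\det(\mathsf{P}_G)^n = 1$ by hypothesis, this immediately yields
\[ \det(\mathsf{P}_G^{-n}[I,J]) = (-1)^{\sigma(I)+\sigma(J)}\, f_G(J^c,I^c;n). \]

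The last step is to identify $\det(\mathsf{P}_G^{-n}[I,J])$ with the recurrence-based extension of $f_G(I,J;n)$ to negative $n$. Because $\mathsf{P}_G$ is invertible, the bi-infinite sequence $(\mathsf{P}_G^n)_{n \in \mathbb{Z}}$ satisfies the Cayley--Hamilton relation of $\mathsf{P}_G$ in both directions; the same therefore holds entry-wise for $(\mathsf{C}_k(\mathsf{P}_G)^n)_{n \in \mathbb{Z}}$, and the uniqueness of the recurrence-determined extension (cf.\ \cref{rem:neg_vals}) forces the two to agree. I do not anticipate a serious obstacle beyond bookkeeping: the only delicate points are invoking Jacobi's identity with the correct index swap $(I,J) \leftrightarrow (J^c,I^c)$ and sign, and matching the two notions of extension to negative $n$.
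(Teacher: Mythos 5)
Your proposal is correct and follows essentially the same route as the paper: the Jacobi minor identity you invoke is exactly the compound–adjugate relation of \cref{lem:com_adj} in disguise, your Cauchy–Binet/Cayley–Hamilton step is the content of \cref{lem:matrix_rec} (which the paper notes Speyer proves via Cayley–Hamilton), and your factorization $\mathsf{P}_{G^n}=\mathsf{P}_G^n$ followed by LGV reproduces the paper's decomposition $\mathrm{com}_k(\mathsf{P}_{G^n})=\mathrm{com}_k(\mathsf{P}_G)^n$. You were also right to check that the constant term $\pm\det(\mathsf{P}_G)^{\binom{m-1}{k-1}}$ of the recurrence is nonzero, which the paper handles by deducing invertibility of $\mathrm{com}_k(\mathsf{P}_G)$ from \cref{lem:com_adj}.
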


\begin{remark} \label{rem:determinant}
In practice, the conditions $\det(\mathsf{P}_G)\neq 0$ and $\det(\mathsf{P}_G)=1$ in \cref{thm:main} can be verified using the LGV lemma. For example, to show $\det(\mathsf{P}_G)=1$ we check that there is a unique non-intersecting tuple of paths connecting all the sources to all the sinks, and that every edge in this tuple has weight one. 

We also note that for any $G$ with $\det(\mathsf{P}_G) \neq 0$, the proof of \cref{thm:main} will give
\[f_G(I,J;-n) = (-1)^{\sigma(I)+\sigma(J)} \, \det(\mathsf{P}_G)^{-n} \, f_G(J^c,I^c;n).\]
When $\det(\mathsf{P}_G)\neq1$, the right-hand side of this equality does not have a direct combinatorial meaning, but it is a ratio of terms with combinatorial meaning.
\end{remark}

\begin{remark}
Both the acyclicity condition, and the planarity condition, can be relaxed in the LGV lemma in various ways (see, e.g.,~\cite{fomin2001loop, talaska2008formula,  talaska2012determinants}). We could similarly relax these conditions for \cref{thm:main}. However, we chose to state the theorem in the cleanest way possible. As we will see in \cref{sec:apps}, the theorem as stated suffices for our motivating applications.
\end{remark}

\begin{remark} \label{rem:speyer}
\Cref{thm:main} is very similar in form to a reciprocity result for the enumeration of perfect matchings in a linearly growing sequence of graphs that was proved by Speyer in the unpublished manuscript~\cite{speyer2001reciprocity}. While it is known that non-intersecting path enumeration problems can often be translated to perfect matching enumeration problems (see, e.g., \cite[\S3.3]{kuperberg2002kasteleyn}), we do not see any way to directly deduce \cref{thm:main} from Speyer's theorem. It would certainly be interesting to find a common generalization of these two reciprocity theorems.
\end{remark}

Though we just stated in \cref{rem:speyer} that we are not exactly sure how \cref{thm:main} is related to Speyer's theorem in~\cite{speyer2001reciprocity}, the starting points for the two proofs are the same. We will need the following lemma explaining how the sequence of particular entries of powers of a fixed matrix compares to the same sequence for its inverse. (See also~\cite[\S5]{jang2022negative} for a similar application of matrix inverses to reciprocity.)

\begin{lemma} \label{lem:matrix_rec}
Let $\mathsf{M}$ be an invertible $m \times m$ matrix and fix $i,j$ with $1 \leq i,j \leq m$. Define $f\colon \mathbb{N} \to \mathbb{C}$ by $f(n)\coloneqq (\mathsf{M}^n)_{i,j}$ for all $n\geq 0$. Then $f$ satisfies a linear recurrence, and $f(-n) = (\mathsf{M}^{-n})_{i,j}$ for all $n \geq 1$.
\end{lemma}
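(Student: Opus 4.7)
The plan is to produce an explicit linear recurrence for $f$ via the Cayley--Hamilton theorem, and then observe that the natural bilateral extension $n \mapsto (\mathsf{M}^n)_{i,j}$ (for all $n \in \mathbb{Z}$, interpreting $\mathsf{M}^{-n}$ as $(\mathsf{M}^{-1})^n$) satisfies the same recurrence, so by uniqueness of the extension determined by a recurrence and initial values, it must coincide with $f(-n)$ for $n \geq 1$.

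First I would write down the characteristic polynomial $p(x) = x^m + c_1 x^{m-1} + \cdots + c_{m-1} x + c_m$ of $\mathsf{M}$. Because $\mathsf{M}$ is invertible, the constant term $c_m = (-1)^m \det(\mathsf{M})$ is nonzero. Cayley--Hamilton gives the matrix identity $\mathsf{M}^m + c_1 \mathsf{M}^{m-1} + \cdots + c_m I = 0$, and multiplying on the right by $\mathsf{M}^n$ yields
\[ \mathsf{M}^{n+m} + c_1 \mathsf{M}^{n+m-1} + \cdots + c_m \mathsf{M}^n = 0 \]
for every $n \geq 0$. Extracting the $(i,j)$ entry gives a linear recurrence of the form demanded by the paper's definition, with $d = m$ and $\alpha_k = c_k$, and with $\alpha_d = c_m \neq 0$. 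This proves the first claim.

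For the second claim, let $\widetilde{f}\colon \mathbb{Z} \to \mathbb{C}$ be defined by $\widetilde{f}(n) \coloneqq (\mathsf{M}^n)_{i,j}$, where for $n < 0$ I interpret $\mathsf{M}^n \coloneqq (\mathsf{M}^{-1})^{|n|}$. The key observation is that the matrix identity displayed above is valid for \emph{every} $n \in \mathbb{Z}$: I just multiply the Cayley--Hamilton relation on the right by $\mathsf{M}^n$, which makes sense for negative $n$ since $\mathsf{M}$ is invertible. Taking the $(i,j)$ entry shows $\widetilde{f}$ satisfies the same linear recurrence on all of $\mathbb{Z}$.

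Finally, since $\widetilde{f}(n) = f(n)$ for $n \geq 0$, the two functions $\widetilde{f}$ and the paper's extension of $f$ share the same initial values $f(0), f(1), \ldots, f(m-1)$ and satisfy the same recurrence with $\alpha_d \neq 0$; running this recurrence backwards from those initial values determines both functions uniquely on the negative integers, so $f(-n) = \widetilde{f}(-n) = (\mathsf{M}^{-n})_{i,j}$ for all $n \geq 1$. There is no real obstacle here: the only thing requiring a moment's care is noticing that Cayley--Hamilton's usefulness extends to negative powers precisely because invertibility makes the nonzero constant term $c_m$ of the characteristic polynomial pin down the leading coefficient $\alpha_d$ of the recurrence.
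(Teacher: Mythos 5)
Your proof is correct, but it takes a different route from the one the paper actually writes out. You argue via the Cayley--Hamilton theorem: the characteristic polynomial of $\mathsf{M}$ supplies an explicit degree-$m$ recurrence whose last coefficient $c_m = (-1)^m \det(\mathsf{M})$ is nonzero by invertibility, and since the relation $\mathsf{M}^{n+m} + c_1 \mathsf{M}^{n+m-1} + \cdots + c_m \mathsf{M}^n = 0$ holds for \emph{all} $n \in \mathbb{Z}$, the bilateral function $n \mapsto (\mathsf{M}^n)_{i,j}$ satisfies the same recurrence everywhere and must agree with the backward extension by uniqueness. This is precisely the approach the paper attributes to Speyer and deliberately sets aside; the paper instead works with the generating function $F(x) = ((\mathsf{I} - x\mathsf{M})^{-1})_{i,j}$, uses the cofactor formula to exhibit it as a rational function with numerator degree strictly less than denominator degree (which gives the linear recurrence via the equivalence in \cref{rem:neg_vals}), and then computes $-F(x^{-1}) = (x\mathsf{M}^{-1}(\mathsf{I}-x\mathsf{M}^{-1})^{-1})_{i,j} = \sum_{n \geq 1} (\mathsf{M}^{-n})_{i,j}\,x^n$ to identify the negative values. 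Your route is more elementary and makes the backward-running of the recurrence completely transparent; its one point of reliance is the fact (noted in \cref{rem:neg_vals}) that the extension to negative integers does not depend on which recurrence one uses, so that extending via the characteristic-polynomial recurrence really does compute the paper's $f(-n)$ --- you implicitly use this and it would be worth saying explicitly. The paper's generating-function route buys a seamless fit with its chosen formalism for negative evaluations, where $f(-n)$ is \emph{defined} by the rational-function identity $\sum_{n\geq 1} f(-n)x^n = -F(x^{-1})$, so no such independence argument is needed there.
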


\begin{proof}
As mentioned, this lemma appears as~\cite[Claim 1]{speyer2001reciprocity}. There, Speyer proves this lemma via the Cayley--Hamilton theorem (i.e., the fact that $\mathsf{M}$ satisfies its own characteristic polynomial). Let us explain a slightly different approach via generating functions. Here we closely follow~\cite[proof of Theorem 4.7.2]{stanley2012ec1}.

Let $f(n)$ be as in the statement of the lemma and let $F(x) \coloneqq \sum_{n \geq 0} f(n) \, x^n$. Observe that $F(x) = ((\mathsf{I} - x \mathsf{M})^{-1})_{i,j}$ where $\mathsf{I}$ is the $m\times m$ identity matrix. Thus, by the standard cofactor formula for the entries of the inverse of a matrix, we have
\[ F(x) = \frac{(-1)^{i+j} \, \det(\mathsf{I} - x \mathsf{M}: j,i)}{\det(\mathsf{I} - x \mathsf{M})}.\]
Here $(\mathsf{N}: i,j)$ is the matrix we get by deleting the $i$th row and $j$th column of $\mathsf{N}$.

If the characteristic polynomial of $\mathsf{M}$ is $\det(x\mathsf{I}-\mathsf{M}) = x^m + \alpha_1 \, x^{m-1} + \cdots + \alpha_{k} \, x^{m-k} $ then $\det(\mathsf{I}-x\mathsf{M}) = 1+\alpha_1 \, x + \cdots + \alpha_k \, x^k$. In particular, if $\mathsf{M}$ is invertible, so that $0$ is not a root of the characteristic polynomial of $\mathsf{M}$, then the degree of $\det(\mathsf{I}-x\mathsf{M})$ is~$m$. Meanwhile, the degree of $\det(\mathsf{I} - x \mathsf{M}: j,i)$ is less than or equal to $m-1$. Hence, $F(x)$ is a rational function where the numerator has degree strictly less than the denominator. So by the discussion in \cref{rem:neg_vals}, $f(n)$ satisfies a linear recurrence.

Now consider the rational function $-F(x^{-1})$. We have 
\begin{align*}
-F(x^{-1}) &= -( (\mathsf{I}-x^{-1}\mathsf{M})^{-1})_{i,j} \\
&= (x\mathsf{M}^{-1}(\mathsf{I}-x\mathsf{M}^{-1})^{-1})_{i,j} \\
&= \sum_{n\geq 1}(\mathsf{M}^{-n})_{i,j} \, x^n.
\end{align*}
By the discussion in \cref{rem:neg_vals}, we conclude $f(-n) = (\mathsf{M}^{-n})_{i,j}$ for all $n \geq 0$.
\end{proof}

The other main result from linear algebra we will need in our proof of \cref{thm:main} is the relationship between the compound and adjugate matrices. Let $\mathsf{M}$ be an $m \times m$ matrix, and let $0 \leq k \leq m$. The \dfn{$k$th compound matrix} of $\mathsf{M}$, denoted $\mathrm{com}_k(\mathsf{M})$, and \dfn{$k$th adjugate matrix} of $\mathsf{M}$, denoted $\mathrm{adj}_k(\mathsf{M})$, are two $\binom{m}{k}\times\binom{m}{k}$ matrices whose entries are minors of $\mathsf{M}$. More precisely, the rows and columns of these matrices are indexed by $k$-element subsets of $[m]$ ordered lexicographically, and for $k$-element subsets $I,J\subseteq [m]$, the corresponding entries of these matrices are
\begin{align*}
\mathrm{com}_k(\mathsf{M})_{I,J} &\coloneqq \det(\mathsf{M}[I,J]) \\
\mathrm{adj}_k(\mathsf{M})_{I,J} &\coloneqq (-1)^{\sigma(I)+\sigma(J)} \, \det(\mathsf{M}[J^c,I^c])
\end{align*}
The following lemma explains the significance of these two matrices.

\begin{lemma} \label{lem:com_adj}
Let $\mathsf{M}$ be an $m \times m$ matrix. Then, for any $0 \leq k \leq m$,
\[ \mathrm{com}_k(\mathsf{M}) \, \mathrm{adj}_k(\mathsf{M}) = \mathrm{adj}_k(\mathsf{M}) \, \mathrm{com}_k(\mathsf{M}) = \det(\mathsf{M}) \cdot \mathsf{I} \]
where $\mathsf{I}$ is the $\binom{m}{k}\times\binom{m}{k}$ identity matrix.
\end{lemma}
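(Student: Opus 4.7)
The plan is to recognize this as a form of Jacobi's generalized Laplace identity for minors, and prove it by directly expanding the matrix product entry-wise and identifying the resulting sum as a Laplace expansion of a cleverly chosen auxiliary matrix. Concretely, I would fix $k$-element subsets $I, J \subseteq [m]$ and write
\[
\bigl(\mathrm{com}_k(\mathsf{M}) \, \mathrm{adj}_k(\mathsf{M})\bigr)_{I,J} = \sum_K (-1)^{\sigma(K)+\sigma(J)} \det(\mathsf{M}[I,K]) \, \det(\mathsf{M}[J^c,K^c]),
\]
the sum ranging over $k$-element subsets $K \subseteq [m]$. The goal is to show this sum equals $\det(\mathsf{M})$ when $I = J$ and vanishes otherwise.

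Next, I would introduce the auxiliary matrix $\mathsf{M}'$ defined as follows: writing $I = \{i_1 < \cdots < i_k\}$ and $J = \{j_1 < \cdots < j_k\}$, let row $j_p$ of $\mathsf{M}'$ be row $i_p$ of $\mathsf{M}$ for each $p = 1, \ldots, k$, and let row $r$ of $\mathsf{M}'$ coincide with row $r$ of $\mathsf{M}$ for $r \in J^c$. Since the relative orderings of $I$ and $J$ match by construction, $\det(\mathsf{M}'[J,K]) = \det(\mathsf{M}[I,K])$ and $\det(\mathsf{M}'[J^c,K^c]) = \det(\mathsf{M}[J^c,K^c])$ for every $K$. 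Laplace expansion of $\det(\mathsf{M}')$ along the rows indexed by $J$ therefore identifies the displayed sum with $\det(\mathsf{M}')$. When $I = J$ we have $\mathsf{M}' = \mathsf{M}$, giving $\det(\mathsf{M})$ as desired. When $I \neq J$, the fact that $|I|=|J|$ forces some index $p$ with $i_p \in J^c$; then row $i_p$ of $\mathsf{M}'$ (inherited from $\mathsf{M}$) and row $j_p$ of $\mathsf{M}'$ (defined to equal row $i_p$ of $\mathsf{M}$) coincide, and since $i_p \neq j_p$ these are two distinct equal rows of $\mathsf{M}'$, forcing $\det(\mathsf{M}') = 0$. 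This establishes $\mathrm{com}_k(\mathsf{M}) \, \mathrm{adj}_k(\mathsf{M}) = \det(\mathsf{M}) \cdot \mathsf{I}$.

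For the other equality, I would pass to the transpose. A short check of the definitions shows $\mathrm{com}_k(\mathsf{M}^T) = \mathrm{com}_k(\mathsf{M})^T$ and $\mathrm{adj}_k(\mathsf{M}^T) = \mathrm{adj}_k(\mathsf{M})^T$, so applying the identity already established to $\mathsf{M}^T$ (and using $\det(\mathsf{M}^T) = \det(\mathsf{M})$) and then transposing yields $\mathrm{adj}_k(\mathsf{M}) \, \mathrm{com}_k(\mathsf{M}) = \det(\mathsf{M}) \cdot \mathsf{I}$.

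The main obstacle is purely bookkeeping: one must be careful that the row substitution preserves the sign in the Laplace expansion (which it does because the substitution is done so that the rows in positions indexed by $J$ appear in the same relative order as the rows of $\mathsf{M}$ indexed by $I$, so no extra permutation sign appears when reading off the $J$-rows of $\mathsf{M}'$ as $\mathsf{M}[I, \cdot]$). Once this is in place, the rest is routine.
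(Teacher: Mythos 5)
Your proof is correct and follows the same route the paper takes: the paper simply cites the generalized Laplace cofactor expansion (referring to Aitken) for this identity, and your argument is precisely that expansion, carried out in full with the row-substitution and equal-rows argument handling the off-diagonal entries and the transpose trick handling the second product. The details all check: the sign bookkeeping in the auxiliary matrix $\mathsf{M}'$ is right, and $i_p \in J^c$ together with $j_p \in J$ does force $i_p \neq j_p$, so the vanishing for $I \neq J$ is sound.
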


\begin{proof}
This is essentially an expression of the (generalized) Laplace cofactor expansion of the determinant. See for instance \cite[Chapter V, \S38]{aitken1954determinants}.
\end{proof}

We can now prove our main result.

\begin{proof}[Proof of \cref{thm:main}]
Because $G^n$ consists of $n$ copies of $G$ glued together sequentially, any $k$-tuple of non-intersecting paths in $G^n$ connecting sources to sinks decomposes as a concatenation of $n$ such tuples of paths in the various copies of $G$. By the LGV lemma (\cref{lem:LGV}), the entries of $\mathrm{com}_k(\mathsf{P}_{G^n})$ and $\mathrm{com}_k(\mathsf{P}_{G})$ record the generating functions of such tuples. Thus, $\mathrm{com}_k(\mathsf{P}_{G^n})=\mathrm{com}_k(\mathsf{P}_{G})^n$. By the same logic, and being careful with the sign, we also have $\mathrm{adj}_k(\mathsf{P}_{G^n})=\mathrm{adj}_k(\mathsf{P}_{G})^n$.

For $k$-element subsets $I,J\subseteq[m]$ we have, again by the LGV lemma, that
\[f(I,J;n)=\mathrm{com}_k(\mathsf{P}_{G^n})_{I,J}=(\mathrm{com}_k(\mathsf{P}_{G})^n)_{I,J}.\]
From \cref{lem:com_adj} we see that the supposition $\det(\mathsf{P}_{G})\neq 0$ implies that $\mathrm{com}_k(\mathsf{P}_{G})$ is invertible. Hence, \cref{lem:matrix_rec} says that $f_G(I,J;n)$ satisfies a linear recurrence.

Now suppose $\det(\mathsf{P}_{G})=1$. Then \cref{lem:com_adj} says that $\mathrm{com}_k(\mathsf{P}_{G})^{-1} = \mathrm{adj}_k(\mathsf{P}_{G})$. Hence, by \cref{lem:matrix_rec},
\[f_G(I,J;-n) = (\mathrm{adj}_k(\mathsf{P}_{G})^n)_{I,J} = \mathrm{adj}_k(\mathsf{P}_{G^n})_{I,J} = (-1)^{\sigma(I)+\sigma(J)} \, f_G(J^c,I^c;n)\]
where for the last equality we appeal once more to the LGV lemma.
\end{proof}

\begin{remark}
A natural question is to what extent the proof of \cref{thm:main} can be made combinatorial. The standard proof of the LGV lemma is via a sign-reversing involution, and so appeals to the LGV lemma can in principle always be made combinatorial. \Cref{lem:matrix_rec} more-or-less follows from the Cayley--Hamilton theorem, for which various combinatorial proofs exist (see, e.g., \cite{straubing1983combinatorial}). On the other hand, we do not know of any combinatorial proof of \cref{lem:com_adj}. Indeed, we are not aware of any bijection realizing the identity in \cref{lem:com_adj} even in the special case of path matrices, where the entries of the compound and adjugate matrices have a clear combinatorial meaning. It would be interesting to construct such a bijection.
\end{remark}

\section{Applications} \label{sec:apps}

In this section we review some specific networks where \cref{thm:main} yields a combinatorial reciprocity result of particular significance.

\subsection{Reciprocity for fans of bounded Dyck paths} \label{sec:dyck}

A \dfn{Dyck path} of semilength~$n$ is a sequence $D = (x_0,y_0), (x_1,y_1),\ldots, (x_{2n},y_{2n})$ of lattice points $(x_i,y_i) \in \mathbb{Z}^2$ such that:
\begin{itemize}
\item the path is from $(x_0,y_0)=(0,0)$ to $(x_{2n},y_{2n})=(2n,0)$;
\item for $i=1,\ldots,2n$, each step $(x_i,y_i)-(x_{i-1},y_{i-1})$ of the path is either an \dfn{up step} of the form $(1,1)$ or a \dfn{down step} of the form $(1,-1)$;
\item the path never goes below the $x$-axis, i.e., $y_i \geq 0$ for $i=0,\ldots,2n$.
\end{itemize}
Necessarily a Dyck path of semilength $n$ has $n$ up steps and $n$ down steps. The number of Dyck paths of semilength $n$ is the famous \dfn{Catalan number} $C_n 
= \frac{1}{n+1}\binom{2n}{n}$. 

Let $D = (x_0,y_0),\ldots,(x_{2n},y_{2n})$ and $D' = (x'_0,y'_0),\ldots,(x'_{2n},y'_{2n})$ be two Dyck paths. We write $D \leq D'$ to mean that $D$ stays weakly below~$D'$, i.e., that $y_i \leq y'_i$ for $i=0,\ldots,2n$. An \dfn{$m$-fan of Dyck paths} is an $m$-tuple $(D_1,\ldots,D_m)$ of Dyck paths for which $D_1 \leq D_2 \leq \cdots \leq D_m$. 

\begin{figure}
\begin{tikzpicture}
\node at (0,0) {\scalebox{0.75}{\begin{tikzpicture}
\node at (0,0) {\begin{tikzpicture}
\draw[red,fill=red] (0,0) circle (0.1);
\draw[red,fill=red] (1,1) circle (0.1);
\draw[red,fill=red] (2,0) circle (0.1);
\draw[red,fill=red] (3,1) circle (0.1);
\draw[red,fill=red] (4,0) circle (0.1);
\draw[red,fill=red] (5,1) circle (0.1);
\draw[red,fill=red] (6,2) circle (0.1);
\draw[red,fill=red] (7,1) circle (0.1);
\draw[red,fill=red] (8,0) circle (0.1);
\draw[very thick, red] (0,0) -- (1,1) -- (2,0) -- (3,1) -- (4,0) -- (5,1) -- (6,2) -- (7,1) -- (8,0);
\node at (4,4) {};
\end{tikzpicture}};
\node at (0,0.25) {\begin{tikzpicture}
\draw[purple,fill=purple] (0,0) circle (0.1);
\draw[purple,fill=purple] (1,1) circle (0.1);
\draw[purple,fill=purple] (2,0) circle (0.1);
\draw[purple,fill=purple] (3,1) circle (0.1);
\draw[purple,fill=purple] (4,2) circle (0.1);
\draw[purple,fill=purple] (5,1) circle (0.1);
\draw[purple,fill=purple] (6,2) circle (0.1);
\draw[purple,fill=purple] (7,1) circle (0.1);
\draw[purple,fill=purple] (8,0) circle (0.1);
\draw[very thick, purple] (0,0) -- (1,1) -- (2,0) -- (3,1) -- (4,2) -- (5,1) -- (6,2) -- (7,1) -- (8,0);
\node at (4,4) {};
\end{tikzpicture}};
\node at (0,0.5) {\begin{tikzpicture}
\draw[blue,fill=blue] (0,0) circle (0.1);
\draw[blue,fill=blue] (1,1) circle (0.1);
\draw[blue,fill=blue] (2,0) circle (0.1);
\draw[blue,fill=blue] (3,1) circle (0.1);
\draw[blue,fill=blue] (4,2) circle (0.1);
\draw[blue,fill=blue] (5,1) circle (0.1);
\draw[blue,fill=blue] (6,2) circle (0.1);
\draw[blue,fill=blue] (7,1) circle (0.1);
\draw[blue,fill=blue] (8,0) circle (0.1);
\draw[very thick, blue] (0,0) -- (1,1) -- (2,0) -- (3,1) -- (4,2) -- (5,1) -- (6,2) -- (7,1) -- (8,0);
\node at (4,4) {};
\end{tikzpicture}};
\node at (0,0.75) {\begin{tikzpicture}
\draw[green,fill=green] (0,0) circle (0.1);
\draw[green,fill=green] (1,1) circle (0.1);
\draw[green,fill=green] (2,2) circle (0.1);
\draw[green,fill=green] (3,1) circle (0.1);
\draw[green,fill=green] (4,2) circle (0.1);
\draw[green,fill=green] (5,3) circle (0.1);
\draw[green,fill=green] (6,2) circle (0.1);
\draw[green,fill=green] (7,1) circle (0.1);
\draw[green,fill=green] (8,0) circle (0.1);
\draw[very thick, green] (0,0) -- (1,1) -- (2,2) -- (3,1) -- (4,2) -- (5,3) -- (6,2) -- (7,1) -- (8,0);
\node at (4,4) {};
\end{tikzpicture}};
\node at (0,1) {\begin{tikzpicture}
\draw[yellow,fill=yellow] (0,0) circle (0.1);
\draw[yellow,fill=yellow] (1,1) circle (0.1);
\draw[yellow,fill=yellow] (2,2) circle (0.1);
\draw[yellow,fill=yellow] (3,1) circle (0.1);
\draw[yellow,fill=yellow] (4,2) circle (0.1);
\draw[yellow,fill=yellow] (5,3) circle (0.1);
\draw[yellow,fill=yellow] (6,2) circle (0.1);
\draw[yellow,fill=yellow] (7,1) circle (0.1);
\draw[yellow,fill=yellow] (8,0) circle (0.1);
\draw[very thick, yellow] (0,0) -- (1,1) -- (2,2) -- (3,1) -- (4,2) -- (5,3) -- (6,2) -- (7,1) -- (8,0);
\node at (4,4) {};
\end{tikzpicture}};
\end{tikzpicture}}};
\end{tikzpicture} \qquad \vrule \hspace{-1cm} \begin{tikzpicture}
\node at (0,0) {\ytableausetup{boxsize=2em}
\begin{ytableau}
5 & 3 & 0 \\
5 & 1 \\
3
\end{ytableau}};
\node at (-0.35,0.4) {\rotatebox{45}{\scalebox{0.54}{\begin{tikzpicture}
\node at (0,0) {\begin{tikzpicture}
\draw[red,fill=red] (0,0) circle (0.1);
\draw[red,fill=red] (1,1) circle (0.1);
\draw[red,fill=red] (2,0) circle (0.1);
\draw[red,fill=red] (3,1) circle (0.1);
\draw[red,fill=red] (4,0) circle (0.1);
\draw[red,fill=red] (5,1) circle (0.1);
\draw[red,fill=red] (6,2) circle (0.1);
\draw[red,fill=red] (7,1) circle (0.1);
\draw[red,fill=red] (8,0) circle (0.1);
\draw[very thick, red] (0,0) -- (1,1) -- (2,0) -- (3,1) -- (4,0) -- (5,1) -- (6,2) -- (7,1) -- (8,0);
\node at (4,4) {};
\end{tikzpicture}};
\node at (0,0.15) {\begin{tikzpicture}
\draw[purple,fill=purple] (0,0) circle (0.1);
\draw[purple,fill=purple] (1,1) circle (0.1);
\draw[purple,fill=purple] (2,0) circle (0.1);
\draw[purple,fill=purple] (3,1) circle (0.1);
\draw[purple,fill=purple] (4,2) circle (0.1);
\draw[purple,fill=purple] (5,1) circle (0.1);
\draw[purple,fill=purple] (6,2) circle (0.1);
\draw[purple,fill=purple] (7,1) circle (0.1);
\draw[purple,fill=purple] (8,0) circle (0.1);
\draw[very thick, purple] (0,0) -- (1,1) -- (2,0) -- (3,1) -- (4,2) -- (5,1) -- (6,2) -- (7,1) -- (8,0);
\node at (4,4) {};
\end{tikzpicture}};
\node at (0,0.3) {\begin{tikzpicture}
\draw[blue,fill=blue] (0,0) circle (0.1);
\draw[blue,fill=blue] (1,1) circle (0.1);
\draw[blue,fill=blue] (2,0) circle (0.1);
\draw[blue,fill=blue] (3,1) circle (0.1);
\draw[blue,fill=blue] (4,2) circle (0.1);
\draw[blue,fill=blue] (5,1) circle (0.1);
\draw[blue,fill=blue] (6,2) circle (0.1);
\draw[blue,fill=blue] (7,1) circle (0.1);
\draw[blue,fill=blue] (8,0) circle (0.1);
\draw[very thick, blue] (0,0) -- (1,1) -- (2,0) -- (3,1) -- (4,2) -- (5,1) -- (6,2) -- (7,1) -- (8,0);
\node at (4,4) {};
\end{tikzpicture}};
\node at (0,0.45) {\begin{tikzpicture}
\draw[green,fill=green] (0,0) circle (0.1);
\draw[green,fill=green] (1,1) circle (0.1);
\draw[green,fill=green] (2,2) circle (0.1);
\draw[green,fill=green] (3,1) circle (0.1);
\draw[green,fill=green] (4,2) circle (0.1);
\draw[green,fill=green] (5,3) circle (0.1);
\draw[green,fill=green] (6,2) circle (0.1);
\draw[green,fill=green] (7,1) circle (0.1);
\draw[green,fill=green] (8,0) circle (0.1);
\draw[very thick, green] (0,0) -- (1,1) -- (2,2) -- (3,1) -- (4,2) -- (5,3) -- (6,2) -- (7,1) -- (8,0);
\node at (4,4) {};
\end{tikzpicture}};
\node at (0,0.6) {\begin{tikzpicture}
\draw[yellow,fill=yellow] (0,0) circle (0.1);
\draw[yellow,fill=yellow] (1,1) circle (0.1);
\draw[yellow,fill=yellow] (2,2) circle (0.1);
\draw[yellow,fill=yellow] (3,1) circle (0.1);
\draw[yellow,fill=yellow] (4,2) circle (0.1);
\draw[yellow,fill=yellow] (5,3) circle (0.1);
\draw[yellow,fill=yellow] (6,2) circle (0.1);
\draw[yellow,fill=yellow] (7,1) circle (0.1);
\draw[yellow,fill=yellow] (8,0) circle (0.1);
\draw[very thick, yellow] (0,0) -- (1,1) -- (2,2) -- (3,1) -- (4,2) -- (5,3) -- (6,2) -- (7,1) -- (8,0);
\node at (4,4) {};
\end{tikzpicture}};
\end{tikzpicture}}}};
\end{tikzpicture}
\caption{The bijection from $5$-fans of Dyck paths of semilength~$4$ (left) to plane partitions of shape $\delta_4$ with entries in~$\{0,\ldots,5\}$ (right). Under this bijection, each entry of the plane partition records the number of Dyck paths below and to the right of its box.}
\label{fig:pparts}
\end{figure}
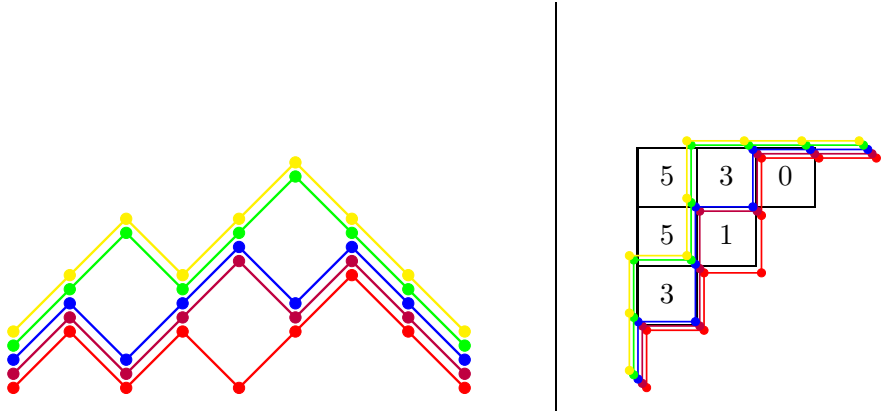

\begin{remark} \label{rem:plane_partitions}
Fans of Dyck paths have an interpretation in terms of plane partitions, which we now briefly explain. Let $\lambda$ be a partition.\footnote{For basics on partitions, Young diagrams, et cetera, see \cref{sec:schur} below.} A \dfn{plane partition of shape $\lambda$} is a filling of the Young diagram of~$\lambda$ with nonnegative integers that is weakly decreasing along rows and down columns. There is a natural bijection between $m$-fans of Dyck paths of semilength~$n$ and plane partitions of \dfn{staircase shape} $\delta_{n}\coloneqq (n-1,n-2,\ldots, 1)$ with entries in~$\{0,1,\ldots,m\}$. This bijection is depicted in \cref{fig:pparts}. 

The number of plane partitions of shape $\delta_n$ with entries in $\{0,\ldots,m\}$ is
\[\prod_{1\leq i <  j \leq n} \frac{2m+i+j-1}{i+j-1} \]
This product formula was first obtained by Proctor~\cite[Corollary 4.1]{proctor1988odd}; see also~\cite[Exercise 7.101(a)]{stanley1999ec2}.\footnote{And furthermore note that these plane partitions constitute Class~6 in Stanley's taxonomy of symmetry classes of plane partitions in a box~\cite{stanley1986symmetries}.} Via the lattice path interpretation of these plane partitions, and using the LGV lemma, this formula amounts to the evaluation of a certain determinant: see \cite[\S3.1.6, Example~4]{ardila2015algebraic} and~\cite[Theorem~26]{krattenthaler1999advanced}.
\end{remark}

We are interested in fans of \emph{bounded} Dyck paths. For $r \geq 1$, we say that the Dyck path $D=(x_0,y_0),\ldots,(x_{2n},y_{2n})$ is \dfn{$r$-bounded} if it never goes above the horizontal line $y=r$, i.e., if $y_i \leq r$ for all $i=0,\ldots,2n$.

For $m,k,n \geq 0$, define $d(m,k;n)$ to be the number of $m$-fans of $(2k+1)$-bounded Dyck paths of semilength $n$. Cigler and Krattenthaler~\cite{cigler2020bounded} obtained the following reciprocity result for the numbers $d(m,k;n)$.

\begin{thm}[{Cigler--Krattenthaler~\cite{cigler2020bounded}}] \label{thm:dyck}
For fixed $m$ and $k$, $d(m,k;n)$ satisfies a linear recurrence as a function of $n$, and for $n\geq 1$ we have
\[ d(m,k;-n) = d(k,m;n+1).\]
\end{thm}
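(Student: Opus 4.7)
The plan is to realize both sides of the asserted reciprocity as specializations of \cref{thm:main} for a suitable acyclic planar network $G$. First I would construct $G$ so that $G^n$ hosts non-intersecting lattice paths corresponding, via the standard "shift" trick, to $m$-fans of $(2k+1)$-bounded Dyck paths of semilength $n$: shifting the $i$-th Dyck path up by $i - 1$ converts a fan $D_1 \le \cdots \le D_m$ into a non-intersecting tuple of $\pm 1$ lattice paths in the strip of heights $\{0, 1, \ldots, 2k+m\}$, and the LGV lemma (\cref{lem:LGV}) then equates the fan count with a determinant. Care is needed because the na\"ive lattice-path graph with diagonal edges is not strictly planar, so its LGV count would over-enumerate fans; a strictly planar embedding (e.g., a rotation to a grid-style picture, or an appropri
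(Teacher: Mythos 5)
Your proposal is truncated, but even taking the visible part on its own terms, it only sets up the first half of the argument and the essential reciprocity step is missing. What you have is the same overall strategy as the paper: build an acyclic planar network $G$ whose $n$-fold concatenation $G^n$ carries non-intersecting path tuples equivalent to $m$-fans of $(2k+1)$-bounded Dyck paths, and then invoke \cref{thm:main}. Two remarks on that first half. First, your shift of the $i$th path up by $i-1$ does not quite convert the fan condition $D_i \leq D_{i+1}$ into vertex-disjointness in the naive lattice-path digraph: consecutive shifted paths occupy heights of opposite parity at each time, so they can cross between lattice points without sharing a vertex, and the LGV determinant then picks up signed permuted configurations. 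The paper avoids this (and the planarity issue you flag) in one stroke by spacing the sources \emph{two} units apart vertically, i.e., shifting by $2(i-1)$; then all paths sit on a single parity class at each time, weak nesting becomes exactly vertex-disjointness, and the resulting brick-wall network (\cref{fig:dyck_network}) is genuinely planar. Your proposed fix of re-embedding or subdividing could also be made to work, but you would still need to verify $\det(\mathsf{P}_G)=1$ (in the paper this follows from the uniqueness of the full non-intersecting tuple, per \cref{rem:determinant}), since \cref{thm:main} requires it.

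The genuine gap is the second half: nothing in the proposal addresses why $d(m,k;-n)$ equals $d(k,m;n+1)$. After \cref{thm:main} is applied with $I=J=[m]$ inside $[m+k]$ (note the sign $(-1)^{\sigma(I)+\sigma(J)}=+1$ here), one obtains $d(m,k;-n)=f_G([m+k]\setminus[m],\,[m+k]\setminus[m];n)$, the count of non-intersecting tuples joining the top $k$ sources to the top $k$ sinks of $G^n$. The crux of the theorem is identifying this with $d(k,m;n+1)$ --- in particular explaining the shift from $n$ to $n+1$. In the paper this is done by the bijection of \cref{fig:dyck_paths_flip}: flip the complementary tuple upside-down, which lands the endpoints on the odd-height (interior-column) vertices, and then prepend an up step and append a down step to each path, producing a $k$-tuple in $G^{n+1}$ starting at $s_1,\ldots,s_k$, i.e., a $k$-fan of $(2m+1)$-bounded Dyck paths of semilength $n+1$. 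Without this identification (or some substitute for it), the application of \cref{thm:main} yields only an equality with an unidentified complementary path count, not the stated reciprocity.
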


\begin{figure}
\begin{tikzpicture}
\draw[dashed] (-5.5*0.75,1.825*0.75) -- (5.5*0.75,1.825*0.75);
\node at (0,0) {\scalebox{0.75}{\begin{tikzpicture}
\node at (0,0) {\begin{tikzpicture}
\draw[red,fill=red] (0,0) circle (0.1);
\draw[red,fill=red] (1,1) circle (0.1);
\draw[red,fill=red] (2,0) circle (0.1);
\draw[red,fill=red] (3,1) circle (0.1);
\draw[red,fill=red] (4,0) circle (0.1);
\draw[red,fill=red] (5,1) circle (0.1);
\draw[red,fill=red] (6,2) circle (0.1);
\draw[red,fill=red] (7,1) circle (0.1);
\draw[red,fill=red] (8,0) circle (0.1);
\draw[red,fill=red] (9,1) circle (0.1);
\draw[red,fill=red] (10,0) circle (0.1);
\draw[very thick, red] (0,0) -- (1,1) -- (2,0) -- (3,1) -- (4,0) -- (5,1) -- (6,2) -- (7,1) -- (8,0) -- (9,1) -- (10,0);
\node at (5,3) {};
\end{tikzpicture}};
\node at (0,0.25) {\begin{tikzpicture}
\draw[purple,fill=purple] (0,0) circle (0.1);
\draw[purple,fill=purple] (1,1) circle (0.1);
\draw[purple,fill=purple] (2,0) circle (0.1);
\draw[purple,fill=purple] (3,1) circle (0.1);
\draw[purple,fill=purple] (4,2) circle (0.1);
\draw[purple,fill=purple] (5,3) circle (0.1);
\draw[purple,fill=purple] (6,2) circle (0.1);
\draw[purple,fill=purple] (7,1) circle (0.1);
\draw[purple,fill=purple] (8,2) circle (0.1);
\draw[purple,fill=purple] (9,1) circle (0.1);
\draw[purple,fill=purple] (10,0) circle (0.1);
\draw[very thick, purple] (0,0) -- (1,1) -- (2,0) -- (3,1) -- (4,2) -- (5,3) -- (6,2) -- (7,1) -- (8,2) -- (9,1) -- (10,0);
\node at (5,3) {};
\end{tikzpicture}};
\node at (0,0.5) {\begin{tikzpicture}
\draw[blue,fill=blue] (0,0) circle (0.1);
\draw[blue,fill=blue] (1,1) circle (0.1);
\draw[blue,fill=blue] (2,0) circle (0.1);
\draw[blue,fill=blue] (3,1) circle (0.1);
\draw[blue,fill=blue] (4,2) circle (0.1);
\draw[blue,fill=blue] (5,3) circle (0.1);
\draw[blue,fill=blue] (6,2) circle (0.1);
\draw[blue,fill=blue] (7,1) circle (0.1);
\draw[blue,fill=blue] (8,2) circle (0.1);
\draw[blue,fill=blue] (9,1) circle (0.1);
\draw[blue,fill=blue] (10,0) circle (0.1);
\draw[very thick, blue] (0,0) -- (1,1) -- (2,0) -- (3,1) -- (4,2) -- (5,3) -- (6,2) -- (7,1) -- (8,2) -- (9,1) -- (10,0);
\node at (5,3) {};
\end{tikzpicture}};
\node at (0,0.75) {\begin{tikzpicture}
\draw[green,fill=green] (0,0) circle (0.1);
\draw[green,fill=green] (1,1) circle (0.1);
\draw[green,fill=green] (2,2) circle (0.1);
\draw[green,fill=green] (3,1) circle (0.1);
\draw[green,fill=green] (4,2) circle (0.1);
\draw[green,fill=green] (5,3) circle (0.1);
\draw[green,fill=green] (6,2) circle (0.1);
\draw[green,fill=green] (7,3) circle (0.1);
\draw[green,fill=green] (8,2) circle (0.1);
\draw[green,fill=green] (9,1) circle (0.1);
\draw[green,fill=green] (10,0) circle (0.1);
\draw[very thick, green] (0,0) -- (1,1) -- (2,2) -- (3,1) -- (4,2) -- (5,3) -- (6,2) -- (7,3) -- (8,2) -- (9,1) -- (10,0);
\node at (5,3) {};
\end{tikzpicture}};
\end{tikzpicture}}};
\end{tikzpicture} \quad \vrule 
\hspace{-1cm} \begin{tikzpicture}
\node at (0,0) {\ytableausetup{boxsize=2em}
\begin{ytableau}
\none & \none & 3 & 1 \\
\none & 1 & 0 \\
4 & 1 \\
3
\end{ytableau}};
\node at (-0.14,0.20) {\rotatebox{45}{\scalebox{0.56}{\begin{tikzpicture}
\node at (0,0) {\begin{tikzpicture}
\draw[red,fill=red] (0,0) circle (0.1);
\draw[red,fill=red] (1,1) circle (0.1);
\draw[red,fill=red] (2,0) circle (0.1);
\draw[red,fill=red] (3,1) circle (0.1);
\draw[red,fill=red] (4,0) circle (0.1);
\draw[red,fill=red] (5,1) circle (0.1);
\draw[red,fill=red] (6,2) circle (0.1);
\draw[red,fill=red] (7,1) circle (0.1);
\draw[red,fill=red] (8,0) circle (0.1);
\draw[red,fill=red] (9,1) circle (0.1);
\draw[red,fill=red] (10,0) circle (0.1);
\draw[very thick, red] (0,0) -- (1,1) -- (2,0) -- (3,1) -- (4,0) -- (5,1) -- (6,2) -- (7,1) -- (8,0) -- (9,1) -- (10,0);
\node at (5,3) {};
\end{tikzpicture}};
\node at (0,0.15) {\begin{tikzpicture}
\draw[purple,fill=purple] (0,0) circle (0.1);
\draw[purple,fill=purple] (1,1) circle (0.1);
\draw[purple,fill=purple] (2,0) circle (0.1);
\draw[purple,fill=purple] (3,1) circle (0.1);
\draw[purple,fill=purple] (4,2) circle (0.1);
\draw[purple,fill=purple] (5,3) circle (0.1);
\draw[purple,fill=purple] (6,2) circle (0.1);
\draw[purple,fill=purple] (7,1) circle (0.1);
\draw[purple,fill=purple] (8,2) circle (0.1);
\draw[purple,fill=purple] (9,1) circle (0.1);
\draw[purple,fill=purple] (10,0) circle (0.1);
\draw[very thick, purple] (0,0) -- (1,1) -- (2,0) -- (3,1) -- (4,2) -- (5,3) -- (6,2) -- (7,1) -- (8,2) -- (9,1) -- (10,0);
\node at (5,3) {};
\end{tikzpicture}};
\node at (0,0.3) {\begin{tikzpicture}
\draw[blue,fill=blue] (0,0) circle (0.1);
\draw[blue,fill=blue] (1,1) circle (0.1);
\draw[blue,fill=blue] (2,0) circle (0.1);
\draw[blue,fill=blue] (3,1) circle (0.1);
\draw[blue,fill=blue] (4,2) circle (0.1);
\draw[blue,fill=blue] (5,3) circle (0.1);
\draw[blue,fill=blue] (6,2) circle (0.1);
\draw[blue,fill=blue] (7,1) circle (0.1);
\draw[blue,fill=blue] (8,2) circle (0.1);
\draw[blue,fill=blue] (9,1) circle (0.1);
\draw[blue,fill=blue] (10,0) circle (0.1);
\draw[very thick, blue] (0,0) -- (1,1) -- (2,0) -- (3,1) -- (4,2) -- (5,3) -- (6,2) -- (7,1) -- (8,2) -- (9,1) -- (10,0);
\node at (5,3) {};
\end{tikzpicture}};
\node at (0,0.45) {\begin{tikzpicture}
\draw[green,fill=green] (0,0) circle (0.1);
\draw[green,fill=green] (1,1) circle (0.1);
\draw[green,fill=green] (2,2) circle (0.1);
\draw[green,fill=green] (3,1) circle (0.1);
\draw[green,fill=green] (4,2) circle (0.1);
\draw[green,fill=green] (5,3) circle (0.1);
\draw[green,fill=green] (6,2) circle (0.1);
\draw[green,fill=green] (7,3) circle (0.1);
\draw[green,fill=green] (8,2) circle (0.1);
\draw[green,fill=green] (9,1) circle (0.1);
\draw[green,fill=green] (10,0) circle (0.1);
\draw[very thick, green] (0,0) -- (1,1) -- (2,2) -- (3,1) -- (4,2) -- (5,3) -- (6,2) -- (7,3) -- (8,2) -- (9,1) -- (10,0);
\node at (5,3) {};
\end{tikzpicture}};
\end{tikzpicture}}}};
\end{tikzpicture}
\caption{The bijection from $4$-fans of $3$-bounded Dyck paths of semilength $5$ (left) to plane partitions of shape $\delta_5 / \delta_3$ with entries in~$\{0,\ldots,4\}$ (right). This plane partition gives the bounded alternating sequence $3 \leq 4 \geq 1 \leq 1 \geq 0 \leq 3 \geq 1$.}
\label{fig:pparts_bounded}
\end{figure}
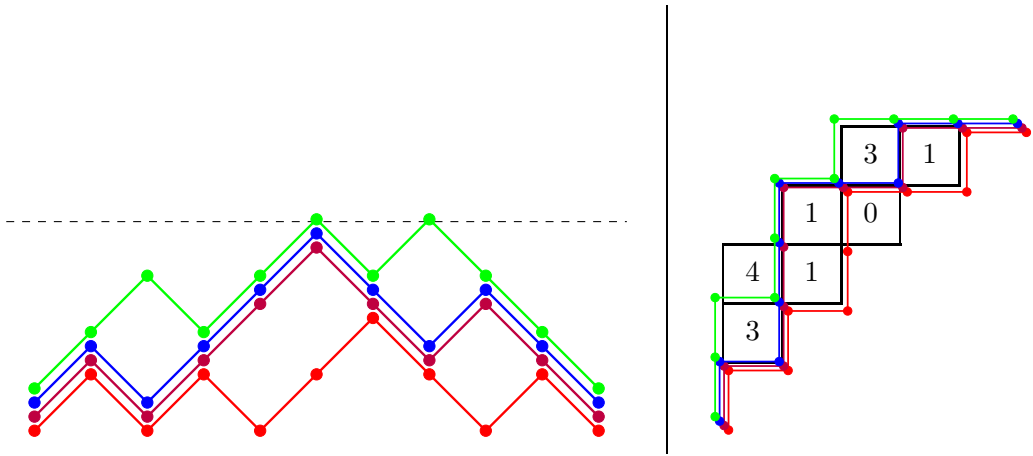

\begin{example}
Consider the case $m=k=1$. Thus, $d(1,1;n)$ counts the number of $3$-bounded Dyck paths of semilength $n$. It is not hard to see directly that this number is $d(1,1;n) = F_{2n-1}$, where the~$F_n$ are the famous \dfn{Fibonacci numbers} defined by~$F_1=F_2=1$ and $F_n = F_{n-1} + F_{n-2}$ for $n > 2$. Because of the well-known explicit formula for the Fibonacci numbers, this means that $d(1,1;n) = \frac{1}{\sqrt{5}} (\varphi^{2n-1} + \varphi^{-2n+1} )$ where $\varphi=\frac{1+\sqrt{5}}{2}$. \Cref{thm:dyck} says we should have $d(1,1;-n) = d(1,1;n+1)$, and indeed this is easy to verify directly from the formula involving $\varphi$.
\end{example}

\begin{remark}
Via the bijection between fans of Dyck paths and plane partitions discussed in \cref{rem:plane_partitions}, $m$-fans of $r$-bounded Dyck paths of semilength $n$ correspond to plane partitions of \dfn{skew staircase shape} $\delta_{n} / \delta_{n-r+1}$ with entries in $\{0,1,\ldots,m\}$. This is depicted in \cref{fig:pparts_bounded}. In the case $r=3$, by reading the entries of these plane partitions from bottom-to-top and left-to-right we see that they are the same as integer sequences $a_1 \leq a_2 \geq a_3 \leq a_4 \geq a_5 \leq \cdots \geq a_{2n-3}$ with $0\leq a_i \leq m$ for all~$i=1,\ldots,2n-3$.  Cigler and Krattenthaler~\cite{cigler2020bounded} refer to such sequences as \dfn{bounded alternating sequences}. They interpret the case $m=1$ of \cref{thm:dyck} as saying that bounded alternating sequences are the ``negative version'' of bounded Dyck paths.
\end{remark}

We now give a short proof of \cref{thm:dyck} using \cref{thm:main}.

\begin{figure}
\begin{tikzpicture}[scale=0.75]
\node[inner sep=1.5,fill=black,draw=black,circle,label=left:{$s_1$}] (1) at (0,0) {};
\node[inner sep=1.5,fill=black,draw=black,circle,label=left:{$s_2$}] (2) at (0,2) {};
\node[inner sep=1.5,fill=black,draw=black,circle,label=left:{$s_3$}] (3) at (0,4) {};
\node[inner sep=1.5,fill=black,draw=black,circle,label=left:{\Large $\vdots$}] (4) at (0,6) {};
\node[inner sep=1.5,fill=black,draw=black,circle,label=left:{$s_{m+k}$}] (5) at (0,8) {};
\node[inner sep=1.5,fill=black,draw=black,circle] (6) at (1,1) {};
\node[inner sep=1.5,fill=black,draw=black,circle] (7) at (1,3) {};
\node[inner sep=1.5,fill=black,draw=black,circle] (8) at (1,5) {};
\node[inner sep=1.5,fill=black,draw=black,circle] (9) at (1,7) {};
\node[inner sep=1.5,fill=black,draw=black,circle] (10) at (1,9) {};
\node[inner sep=1.5,fill=black,draw=black,circle,label=right:{$t_1$}] (11) at (2,0) {};
\node[inner sep=1.5,fill=black,draw=black,circle,label=right:{$t_2$}] (12) at (2,2) {};
\node[inner sep=1.5,fill=black,draw=black,circle,label=right:{$t_3$}] (13) at (2,4) {};
\node[inner sep=1.5,fill=black,draw=black,circle,label=right:{\Large $\vdots$}] (14) at (2,6) {};
\node[inner sep=1.5,fill=black,draw=black,circle,label=right:{$t_{m+k}$}] (15) at (2,8) {};
\draw[->,very thick] (1) -- (6);
\draw[->,very thick] (2) -- (6);
\draw[->,very thick] (2) -- (7);
\draw[->,very thick] (3) -- (7);
\draw[->,very thick] (3) -- (8);
\draw[->,very thick] (4) -- (8);
\draw[->,very thick] (4) -- (9);
\draw[->,very thick] (5) -- (9);
\draw[->,very thick] (5) -- (10);
\draw[->,very thick] (6) -- (11);
\draw[->,very thick] (6) -- (12);
\draw[->,very thick] (7) -- (12);
\draw[->,very thick] (7) -- (13);
\draw[->,very thick] (8) -- (13);
\draw[->,very thick] (8) -- (14);
\draw[->,very thick] (9) -- (14);
\draw[->,very thick] (9) -- (15);
\draw[->,very thick] (10) -- (15);
\end{tikzpicture}
\caption{The acyclic planar network $G$ used in the proof of \cref{thm:dyck}.}
\label{fig:dyck_network}
\end{figure}
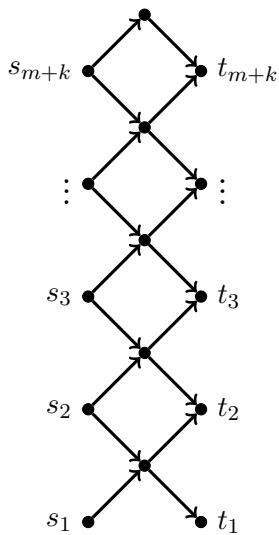

\begin{proof} [Proof of \cref{thm:dyck}]
Let $G$ be the acyclic planar network depicted in \cref{fig:dyck_network}. Notice that $G$ has sources $s_1,\ldots,s_{m+k}$ and sinks $t_1,\ldots,t_{m+k}$, and all edge weights are equal to one. There is a unique non-intersecting tuple of paths in $G$ connecting all of the sources to all of the sinks and hence, as discussed in \cref{rem:determinant}, we have that~$\det(\mathsf{P}_G) = 1$. 

Non-intersecting tuples of paths $\Pi\colon (s_1,\ldots,s_m) \to (t_1,\ldots,t_m)$ in $G^n$ are in bijection with $m$-fans of $k$-bounded Dyck paths of semilength $n$: this is depicted in \cref{fig:dyck_paths}. Hence, $d(m,k;n) = f_{G}([m],[m];n)$. So, by \cref{thm:main}, $d(m,k;n)$ satisfies a linear recurrence as a function of $n$.

Moreover, by \cref{thm:main}, $d(m,k;-n)=f_{G}([m+k]\setminus[m],[m+k]\setminus[m];n)$. And non-intersecting tuples of paths $\Pi\colon (s_{m+1},\ldots,s_{m+k}) \to (t_{m+1},\ldots,t_{m+k})$ in~$G^n$ are in bijection with non-intersecting tuples of paths $\Pi\colon (s_1,\ldots,s_k) \to (t_1,\ldots,t_k)$ in~$G^{n+1}$: this is depicted in \cref{fig:dyck_paths_flip}. Altogether, we conclude as desired that $d(m,k;-n)=f_{G}([k],[k];n+1)=d(k,m;n+1)$.
\end{proof}

\begin{figure}
\begin{tikzpicture}[scale=0.5]
\node[inner sep=1,fill=black,draw=black,circle,label=left:{$s_1$}] (A1) at (0,0) {};
\node[inner sep=1,fill=black,draw=black,circle,label=left:{$s_2$}] (A2) at (0,2) {};
\node[inner sep=1,fill=black,draw=black,circle,label=left:{$s_3$}] (A3) at (0,4) {};
\node[inner sep=1,fill=black,draw=black,circle,label=left:{$s_4$}] (A4) at (0,6) {};
\node[inner sep=1,fill=black,draw=black,circle,label=left:{$s_5$}] (A5) at (0,8) {};
\node[inner sep=1,fill=black,draw=black,circle] (B1) at (1,1) {};
\node[inner sep=1,fill=black,draw=black,circle] (B2) at (1,3) {};
\node[inner sep=1,fill=black,draw=black,circle] (B3) at (1,5) {};
\node[inner sep=1,fill=black,draw=black,circle] (B4) at (1,7) {};
\node[inner sep=1,fill=black,draw=black,circle] (B5) at (1,9) {};
\node[inner sep=1,fill=black,draw=black,circle] (C1) at (2,0) {};
\node[inner sep=1,fill=black,draw=black,circle] (C2) at (2,2) {};
\node[inner sep=1,fill=black,draw=black,circle] (C3) at (2,4) {};
\node[inner sep=1,fill=black,draw=black,circle] (C4) at (2,6) {};
\node[inner sep=1,fill=black,draw=black,circle] (C5) at (2,8) {};
\node[inner sep=1,fill=black,draw=black,circle] (D1) at (3,1) {};
\node[inner sep=1,fill=black,draw=black,circle] (D2) at (3,3) {};
\node[inner sep=1,fill=black,draw=black,circle] (D3) at (3,5) {};
\node[inner sep=1,fill=black,draw=black,circle] (D4) at (3,7) {};
\node[inner sep=1,fill=black,draw=black,circle] (D5) at (3,9) {};
\node[inner sep=1,fill=black,draw=black,circle] (E1) at (4,0) {};
\node[inner sep=1,fill=black,draw=black,circle] (E2) at (4,2) {};
\node[inner sep=1,fill=black,draw=black,circle] (E3) at (4,4) {};
\node[inner sep=1,fill=black,draw=black,circle] (E4) at (4,6) {};
\node[inner sep=1,fill=black,draw=black,circle] (E5) at (4,8) {};
\node[inner sep=1,fill=black,draw=black,circle] (F1) at (5,1) {};
\node[inner sep=1,fill=black,draw=black,circle] (F2) at (5,3) {};
\node[inner sep=1,fill=black,draw=black,circle] (F3) at (5,5) {};
\node[inner sep=1,fill=black,draw=black,circle] (F4) at (5,7) {};
\node[inner sep=1,fill=black,draw=black,circle] (F5) at (5,9) {};
\node[inner sep=1,fill=black,draw=black,circle] (G1) at (6,0) {};
\node[inner sep=1,fill=black,draw=black,circle] (G2) at (6,2) {};
\node[inner sep=1,fill=black,draw=black,circle] (G3) at (6,4) {};
\node[inner sep=1,fill=black,draw=black,circle] (G4) at (6,6) {};
\node[inner sep=1,fill=black,draw=black,circle] (G5) at (6,8) {};
\node[inner sep=1,fill=black,draw=black,circle] (H1) at (7,1) {};
\node[inner sep=1,fill=black,draw=black,circle] (H2) at (7,3) {};
\node[inner sep=1,fill=black,draw=black,circle] (H3) at (7,5) {};
\node[inner sep=1,fill=black,draw=black,circle] (H4) at (7,7) {};
\node[inner sep=1,fill=black,draw=black,circle] (H5) at (7,9) {};
\node[inner sep=1,fill=black,draw=black,circle] (I1) at (8,0) {};
\node[inner sep=1,fill=black,draw=black,circle] (I2) at (8,2) {};
\node[inner sep=1,fill=black,draw=black,circle] (I3) at (8,4) {};
\node[inner sep=1,fill=black,draw=black,circle] (I4) at (8,6) {};
\node[inner sep=1,fill=black,draw=black,circle] (I5) at (8,8) {};
\node[inner sep=1,fill=black,draw=black,circle] (J1) at (9,1) {};
\node[inner sep=1,fill=black,draw=black,circle] (J2) at (9,3) {};
\node[inner sep=1,fill=black,draw=black,circle] (J3) at (9,5) {};
\node[inner sep=1,fill=black,draw=black,circle] (J4) at (9,7) {};
\node[inner sep=1,fill=black,draw=black,circle] (J5) at (9,9) {};
\node[inner sep=1,fill=black,draw=black,circle] (K1) at (10,0) {};
\node[inner sep=1,fill=black,draw=black,circle] (K2) at (10,2) {};
\node[inner sep=1,fill=black,draw=black,circle] (K3) at (10,4) {};
\node[inner sep=1,fill=black,draw=black,circle] (K4) at (10,6) {};
\node[inner sep=1,fill=black,draw=black,circle] (K5) at (10,8) {};
\node[inner sep=1,fill=black,draw=black,circle] (L1) at (11,1) {};
\node[inner sep=1,fill=black,draw=black,circle] (L2) at (11,3) {};
\node[inner sep=1,fill=black,draw=black,circle] (L3) at (11,5) {};
\node[inner sep=1,fill=black,draw=black,circle] (L4) at (11,7) {};
\node[inner sep=1,fill=black,draw=black,circle] (L5) at (11,9) {};
\node[inner sep=1,fill=black,draw=black,circle,label=right:{$t_1$}] (M1) at (12,0) {};
\node[inner sep=1,fill=black,draw=black,circle,label=right:{$t_2$}] (M2) at (12,2) {};
\node[inner sep=1,fill=black,draw=black,circle,label=right:{$t_3$}] (M3) at (12,4) {};
\node[inner sep=1,fill=black,draw=black,circle,label=right:{$t_4$}] (M4) at (12,6) {};
\node[inner sep=1,fill=black,draw=black,circle,label=right:{$t_5$}] (M5) at (12,8) {};
\draw (A1) -- (B1) -- (C1) -- (D1) -- (E1) -- (F1) -- (G1) -- (H1) -- (I1) -- (J1) -- (K1) -- (L1) -- (M1);
\draw (A2) -- (B1) -- (C2) -- (D1) -- (E2) -- (F1) -- (G2) -- (H1) -- (I2) -- (J1) -- (K2) -- (L1) -- (M2);
\draw (A2) -- (B2) -- (C2) -- (D2) -- (E2) -- (F2) -- (G2) -- (H2) -- (I2) -- (J2) -- (K2) -- (L2) -- (M2);
\draw (A3) -- (B2) -- (C3) -- (D2) -- (E3) -- (F2) -- (G3) -- (H2) -- (I3) -- (J2) -- (K3) -- (L2) -- (M3);
\draw (A3) -- (B3) -- (C3) -- (D3) -- (E3) -- (F3) -- (G3) -- (H3) -- (I3) -- (J3) -- (K3) -- (L3) -- (M3);
\draw (A4) -- (B3) -- (C4) -- (D3) -- (E4) -- (F3) -- (G4) -- (H3) -- (I4) -- (J3) -- (K4) -- (L3) -- (M4);
\draw (A4) -- (B4) -- (C4) -- (D4) -- (E4) -- (F4) -- (G4) -- (H4) -- (I4) -- (J4) -- (K4) -- (L4) -- (M4);
\draw (A5) -- (B4) -- (C5) -- (D4) -- (E5) -- (F4) -- (G5) -- (H4) -- (I5) -- (J4) -- (K5) -- (L4) -- (M5);
\draw (A5) -- (B5) -- (C5) -- (D5) -- (E5) -- (F5) -- (G5) -- (H5) -- (I5) -- (J5) -- (K5) -- (L5) -- (M5);
\draw[very thick, red] (A1) -- (B1) -- (C2) -- (D1) -- (E1) -- (F1) -- (G1) -- (H1) -- (I2) -- (J1) -- (K2) -- (L1) -- (M1);
\draw[red,fill=red] (A1) circle (0.2);
\draw[red,fill=red] (B1) circle (0.2);
\draw[red,fill=red] (C2) circle (0.2);
\draw[red,fill=red] (D1) circle (0.2);
\draw[red,fill=red] (E1) circle (0.2);
\draw[red,fill=red] (F1) circle (0.2);
\draw[red,fill=red] (G1) circle (0.2);
\draw[red,fill=red] (H1) circle (0.2);
\draw[red,fill=red] (I2) circle (0.2);
\draw[red,fill=red] (J1) circle (0.2);
\draw[red,fill=red] (K2) circle (0.2);
\draw[red,fill=red] (L1) circle (0.2);
\draw[red,fill=red] (M1) circle (0.2);
\draw[very thick, blue] (A2) -- (B2) -- (C3) -- (D3) -- (E3) -- (F2) -- (G2) -- (H2) -- (I3) -- (J2) -- (K3) -- (L2) -- (M2);
\draw[blue,fill=blue] (A2) circle (0.2);
\draw[blue,fill=blue] (B2) circle (0.2);
\draw[blue,fill=blue] (C3) circle (0.2);
\draw[blue,fill=blue] (D3) circle (0.2);
\draw[blue,fill=blue] (E3) circle (0.2);
\draw[blue,fill=blue] (F2) circle (0.2);
\draw[blue,fill=blue] (G2) circle (0.2);
\draw[blue,fill=blue] (H2) circle (0.2);
\draw[blue,fill=blue] (I3) circle (0.2);
\draw[blue,fill=blue] (J2) circle (0.2);
\draw[blue,fill=blue] (K3) circle (0.2);
\draw[blue,fill=blue] (L2) circle (0.2);
\draw[blue,fill=blue] (M2) circle (0.2);
\draw[very thick, green] (A3) -- (B3) -- (C4) -- (D4) -- (E5) -- (F5) -- (G5) -- (H4) -- (I4) -- (J3) -- (K4) -- (L3) -- (M3);
\draw[green,fill=green] (A3) circle (0.2);
\draw[green,fill=green] (B3) circle (0.2);
\draw[green,fill=green] (C4) circle (0.2);
\draw[green,fill=green] (D4) circle (0.2);
\draw[green,fill=green] (E5) circle (0.2);
\draw[green,fill=green] (F5) circle (0.2);
\draw[green,fill=green] (G5) circle (0.2);
\draw[green,fill=green] (H4) circle (0.2);
\draw[green,fill=green] (I4) circle (0.2);
\draw[green,fill=green] (J3) circle (0.2);
\draw[green,fill=green] (K4) circle (0.2);
\draw[green,fill=green] (L3) circle (0.2);
\draw[green,fill=green] (M3) circle (0.2);
\end{tikzpicture} \vrule \, \begin{tikzpicture}
\node at (0,-2) {};
\draw[dashed] (-6.25*0.5,2.5*0.5+0.5) -- (6.25*0.5,2.5*0.5+0.5);
\node at (0,0) {\begin{tikzpicture}[scale=0.5]
\draw[very thick, red] (0,0) -- (1,1) -- (2,2) -- (3,1) -- (4,0) -- (5,1) -- (6,0) -- (7,1) -- (8,2) -- (9,1) -- (10,2) -- (11,1) -- (12,0);
\draw[red,fill=red] (0,0) circle (0.1);
\draw[red,fill=red] (1,1) circle (0.1);
\draw[red,fill=red] (2,2) circle (0.1);
\draw[red,fill=red] (3,1) circle (0.1);
\draw[red,fill=red] (4,0) circle (0.1);
\draw[red,fill=red] (5,1) circle (0.1);
\draw[red,fill=red] (6,0) circle (0.1);
\draw[red,fill=red] (7,1) circle (0.1);
\draw[red,fill=red] (8,2) circle (0.1);
\draw[red,fill=red] (9,1) circle (0.1);
\draw[red,fill=red] (10,2) circle (0.1);
\draw[red,fill=red] (11,1) circle (0.1);
\draw[red,fill=red] (12,0) circle (0.1);
\node at (6,5) {};
\end{tikzpicture}};
\node at (0,0.25) {\begin{tikzpicture}[scale=0.5]
\draw[very thick, blue] (0,0) -- (1,1) -- (2,2) -- (3,3) -- (4,2) -- (5,1) -- (6,0) -- (7,1) -- (8,2) -- (9,1) -- (10,2) -- (11,1) -- (12,0);
\draw[blue,fill=blue] (0,0) circle (0.1);
\draw[blue,fill=blue] (1,1) circle (0.1);
\draw[blue,fill=blue] (2,2) circle (0.1);
\draw[blue,fill=blue] (3,3) circle (0.1);
\draw[blue,fill=blue] (4,2) circle (0.1);
\draw[blue,fill=blue] (5,1) circle (0.1);
\draw[blue,fill=blue] (6,0) circle (0.1);
\draw[blue,fill=blue] (7,1) circle (0.1);
\draw[blue,fill=blue] (8,2) circle (0.1);
\draw[blue,fill=blue] (9,1) circle (0.1);
\draw[blue,fill=blue] (10,2) circle (0.1);
\draw[blue,fill=blue] (11,1) circle (0.1);
\draw[blue,fill=blue] (12,0) circle (0.1);
\node at (6,5) {};
\end{tikzpicture}};
\node at (0,0.5) {\begin{tikzpicture}[scale=0.5]
\draw[very thick, green] (0,0) -- (1,1) -- (2,2) -- (3,3) -- (4,4) -- (5,5) -- (6,4) -- (7,3) -- (8,2) -- (9,1) -- (10,2) -- (11,1) -- (12,0);
\draw[green,fill=green] (0,0) circle (0.1);
\draw[green,fill=green] (1,1) circle (0.1);
\draw[green,fill=green] (2,2) circle (0.1);
\draw[green,fill=green] (3,3) circle (0.1);
\draw[green,fill=green] (4,4) circle (0.1);
\draw[green,fill=green] (5,5) circle (0.1);
\draw[green,fill=green] (6,4) circle (0.1);
\draw[green,fill=green] (7,3) circle (0.1);
\draw[green,fill=green] (8,2) circle (0.1);
\draw[green,fill=green] (9,1) circle (0.1);
\draw[green,fill=green] (10,2) circle (0.1);
\draw[green,fill=green] (11,1) circle (0.1);
\draw[green,fill=green] (12,0) circle (0.1);
\node at (6,5) {};
\end{tikzpicture}};
\end{tikzpicture}
\caption{In the proof of \cref{thm:main}: the bijection from non-intersecting tuples of paths $\Pi\colon (s_1,s_2,s_3) \to (t_1,t_2,t_3)$ in $G^6$ (left) to $3$-fans of $5$-bounded Dyck paths of semilength $6$ (right). The bijection simply lowers all paths down to the same starting height.}
\label{fig:dyck_paths}
\end{figure}
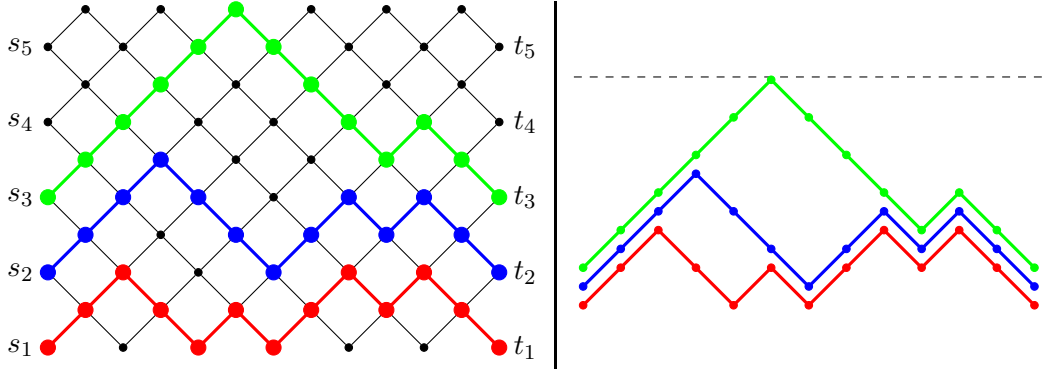

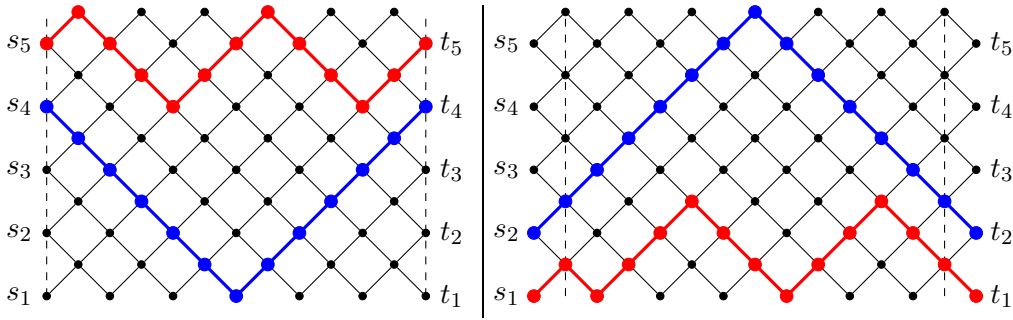
\begin{figure}
\begin{tikzpicture}[scale=0.42]
\node[inner sep=1,fill=black,draw=black,circle,label=left:{$s_1$}] (A1) at (0,0) {};
\node[inner sep=1,fill=black,draw=black,circle,label=left:{$s_2$}] (A2) at (0,2) {};
\node[inner sep=1,fill=black,draw=black,circle,label=left:{$s_3$}] (A3) at (0,4) {};
\node[inner sep=1,fill=black,draw=black,circle,label=left:{$s_4$}] (A4) at (0,6) {};
\node[inner sep=1,fill=black,draw=black,circle,label=left:{$s_5$}] (A5) at (0,8) {};
\node[inner sep=1,fill=black,draw=black,circle] (B1) at (1,1) {};
\node[inner sep=1,fill=black,draw=black,circle] (B2) at (1,3) {};
\node[inner sep=1,fill=black,draw=black,circle] (B3) at (1,5) {};
\node[inner sep=1,fill=black,draw=black,circle] (B4) at (1,7) {};
\node[inner sep=1,fill=black,draw=black,circle] (B5) at (1,9) {};
\node[inner sep=1,fill=black,draw=black,circle] (C1) at (2,0) {};
\node[inner sep=1,fill=black,draw=black,circle] (C2) at (2,2) {};
\node[inner sep=1,fill=black,draw=black,circle] (C3) at (2,4) {};
\node[inner sep=1,fill=black,draw=black,circle] (C4) at (2,6) {};
\node[inner sep=1,fill=black,draw=black,circle] (C5) at (2,8) {};
\node[inner sep=1,fill=black,draw=black,circle] (D1) at (3,1) {};
\node[inner sep=1,fill=black,draw=black,circle] (D2) at (3,3) {};
\node[inner sep=1,fill=black,draw=black,circle] (D3) at (3,5) {};
\node[inner sep=1,fill=black,draw=black,circle] (D4) at (3,7) {};
\node[inner sep=1,fill=black,draw=black,circle] (D5) at (3,9) {};
\node[inner sep=1,fill=black,draw=black,circle] (E1) at (4,0) {};
\node[inner sep=1,fill=black,draw=black,circle] (E2) at (4,2) {};
\node[inner sep=1,fill=black,draw=black,circle] (E3) at (4,4) {};
\node[inner sep=1,fill=black,draw=black,circle] (E4) at (4,6) {};
\node[inner sep=1,fill=black,draw=black,circle] (E5) at (4,8) {};
\node[inner sep=1,fill=black,draw=black,circle] (F1) at (5,1) {};
\node[inner sep=1,fill=black,draw=black,circle] (F2) at (5,3) {};
\node[inner sep=1,fill=black,draw=black,circle] (F3) at (5,5) {};
\node[inner sep=1,fill=black,draw=black,circle] (F4) at (5,7) {};
\node[inner sep=1,fill=black,draw=black,circle] (F5) at (5,9) {};
\node[inner sep=1,fill=black,draw=black,circle] (G1) at (6,0) {};
\node[inner sep=1,fill=black,draw=black,circle] (G2) at (6,2) {};
\node[inner sep=1,fill=black,draw=black,circle] (G3) at (6,4) {};
\node[inner sep=1,fill=black,draw=black,circle] (G4) at (6,6) {};
\node[inner sep=1,fill=black,draw=black,circle] (G5) at (6,8) {};
\node[inner sep=1,fill=black,draw=black,circle] (H1) at (7,1) {};
\node[inner sep=1,fill=black,draw=black,circle] (H2) at (7,3) {};
\node[inner sep=1,fill=black,draw=black,circle] (H3) at (7,5) {};
\node[inner sep=1,fill=black,draw=black,circle] (H4) at (7,7) {};
\node[inner sep=1,fill=black,draw=black,circle] (H5) at (7,9) {};
\node[inner sep=1,fill=black,draw=black,circle] (I1) at (8,0) {};
\node[inner sep=1,fill=black,draw=black,circle] (I2) at (8,2) {};
\node[inner sep=1,fill=black,draw=black,circle] (I3) at (8,4) {};
\node[inner sep=1,fill=black,draw=black,circle] (I4) at (8,6) {};
\node[inner sep=1,fill=black,draw=black,circle] (I5) at (8,8) {};
\node[inner sep=1,fill=black,draw=black,circle] (J1) at (9,1) {};
\node[inner sep=1,fill=black,draw=black,circle] (J2) at (9,3) {};
\node[inner sep=1,fill=black,draw=black,circle] (J3) at (9,5) {};
\node[inner sep=1,fill=black,draw=black,circle] (J4) at (9,7) {};
\node[inner sep=1,fill=black,draw=black,circle] (J5) at (9,9) {};
\node[inner sep=1,fill=black,draw=black,circle] (K1) at (10,0) {};
\node[inner sep=1,fill=black,draw=black,circle] (K2) at (10,2) {};
\node[inner sep=1,fill=black,draw=black,circle] (K3) at (10,4) {};
\node[inner sep=1,fill=black,draw=black,circle] (K4) at (10,6) {};
\node[inner sep=1,fill=black,draw=black,circle] (K5) at (10,8) {};
\node[inner sep=1,fill=black,draw=black,circle] (L1) at (11,1) {};
\node[inner sep=1,fill=black,draw=black,circle] (L2) at (11,3) {};
\node[inner sep=1,fill=black,draw=black,circle] (L3) at (11,5) {};
\node[inner sep=1,fill=black,draw=black,circle] (L4) at (11,7) {};
\node[inner sep=1,fill=black,draw=black,circle] (L5) at (11,9) {};
\node[inner sep=1,fill=black,draw=black,circle,label=right:{$t_1$}] (M1) at (12,0) {};
\node[inner sep=1,fill=black,draw=black,circle,label=right:{$t_2$}] (M2) at (12,2) {};
\node[inner sep=1,fill=black,draw=black,circle,label=right:{$t_3$}] (M3) at (12,4) {};
\node[inner sep=1,fill=black,draw=black,circle,label=right:{$t_4$}] (M4) at (12,6) {};
\node[inner sep=1,fill=black,draw=black,circle,label=right:{$t_5$}] (M5) at (12,8) {};
\draw (A1) -- (B1) -- (C1) -- (D1) -- (E1) -- (F1) -- (G1) -- (H1) -- (I1) -- (J1) -- (K1) -- (L1) -- (M1);
\draw (A2) -- (B1) -- (C2) -- (D1) -- (E2) -- (F1) -- (G2) -- (H1) -- (I2) -- (J1) -- (K2) -- (L1) -- (M2);
\draw (A2) -- (B2) -- (C2) -- (D2) -- (E2) -- (F2) -- (G2) -- (H2) -- (I2) -- (J2) -- (K2) -- (L2) -- (M2);
\draw (A3) -- (B2) -- (C3) -- (D2) -- (E3) -- (F2) -- (G3) -- (H2) -- (I3) -- (J2) -- (K3) -- (L2) -- (M3);
\draw (A3) -- (B3) -- (C3) -- (D3) -- (E3) -- (F3) -- (G3) -- (H3) -- (I3) -- (J3) -- (K3) -- (L3) -- (M3);
\draw (A4) -- (B3) -- (C4) -- (D3) -- (E4) -- (F3) -- (G4) -- (H3) -- (I4) -- (J3) -- (K4) -- (L3) -- (M4);
\draw (A4) -- (B4) -- (C4) -- (D4) -- (E4) -- (F4) -- (G4) -- (H4) -- (I4) -- (J4) -- (K4) -- (L4) -- (M4);
\draw (A5) -- (B4) -- (C5) -- (D4) -- (E5) -- (F4) -- (G5) -- (H4) -- (I5) -- (J4) -- (K5) -- (L4) -- (M5);
\draw (A5) -- (B5) -- (C5) -- (D5) -- (E5) -- (F5) -- (G5) -- (H5) -- (I5) -- (J5) -- (K5) -- (L5) -- (M5);
\draw[dashed] (0,0) -- (0,9);
\draw[dashed] (12,0) -- (12,9);
\draw[very thick, red] (A5) -- (B5) -- (C5) -- (D4) -- (E4) -- (F4) -- (G5) -- (H5) -- (I5) -- (J4) -- (K4) -- (L4) -- (M5);
\draw[red,fill=red] (A5) circle (0.2);
\draw[red,fill=red] (B5) circle (0.2);
\draw[red,fill=red] (C5) circle (0.2);
\draw[red,fill=red] (D4) circle (0.2);
\draw[red,fill=red] (E4) circle (0.2);
\draw[red,fill=red] (F4) circle (0.2);
\draw[red,fill=red] (G5) circle (0.2);
\draw[red,fill=red] (H5) circle (0.2);
\draw[red,fill=red] (I5) circle (0.2);
\draw[red,fill=red] (J4) circle (0.2);
\draw[red,fill=red] (K4) circle (0.2);
\draw[red,fill=red] (L4) circle (0.2);
\draw[red,fill=red] (M5) circle (0.2);
\draw[very thick, blue] (A4) -- (B3) -- (C3) -- (D2) -- (E2) -- (F1) -- (G1) -- (H1) -- (I2) -- (J2) -- (K3) -- (L3) -- (M4);
\draw[blue,fill=blue] (A4) circle (0.2);
\draw[blue,fill=blue] (B3) circle (0.2);
\draw[blue,fill=blue] (C3) circle (0.2);
\draw[blue,fill=blue] (D2) circle (0.2);
\draw[blue,fill=blue] (E2) circle (0.2);
\draw[blue,fill=blue] (F1) circle (0.2);
\draw[blue,fill=blue] (G1) circle (0.2);
\draw[blue,fill=blue] (H1) circle (0.2);
\draw[blue,fill=blue] (I2) circle (0.2);
\draw[blue,fill=blue] (J2) circle (0.2);
\draw[blue,fill=blue] (K3) circle (0.2);
\draw[blue,fill=blue] (L3) circle (0.2);
\draw[blue,fill=blue] (M4) circle (0.2);
\end{tikzpicture} \vrule \begin{tikzpicture}[scale=0.42]
\node[inner sep=1,fill=black,draw=black,circle,label=left:{$s_1$}] (A1) at (0,0) {};
\node[inner sep=1,fill=black,draw=black,circle,label=left:{$s_2$}] (A2) at (0,2) {};
\node[inner sep=1,fill=black,draw=black,circle,label=left:{$s_3$}] (A3) at (0,4) {};
\node[inner sep=1,fill=black,draw=black,circle,label=left:{$s_4$}] (A4) at (0,6) {};
\node[inner sep=1,fill=black,draw=black,circle,label=left:{$s_5$}] (A5) at (0,8) {};
\node[inner sep=1,fill=black,draw=black,circle] (B1) at (1,1) {};
\node[inner sep=1,fill=black,draw=black,circle] (B2) at (1,3) {};
\node[inner sep=1,fill=black,draw=black,circle] (B3) at (1,5) {};
\node[inner sep=1,fill=black,draw=black,circle] (B4) at (1,7) {};
\node[inner sep=1,fill=black,draw=black,circle] (B5) at (1,9) {};
\node[inner sep=1,fill=black,draw=black,circle] (C1) at (2,0) {};
\node[inner sep=1,fill=black,draw=black,circle] (C2) at (2,2) {};
\node[inner sep=1,fill=black,draw=black,circle] (C3) at (2,4) {};
\node[inner sep=1,fill=black,draw=black,circle] (C4) at (2,6) {};
\node[inner sep=1,fill=black,draw=black,circle] (C5) at (2,8) {};
\node[inner sep=1,fill=black,draw=black,circle] (D1) at (3,1) {};
\node[inner sep=1,fill=black,draw=black,circle] (D2) at (3,3) {};
\node[inner sep=1,fill=black,draw=black,circle] (D3) at (3,5) {};
\node[inner sep=1,fill=black,draw=black,circle] (D4) at (3,7) {};
\node[inner sep=1,fill=black,draw=black,circle] (D5) at (3,9) {};
\node[inner sep=1,fill=black,draw=black,circle] (E1) at (4,0) {};
\node[inner sep=1,fill=black,draw=black,circle] (E2) at (4,2) {};
\node[inner sep=1,fill=black,draw=black,circle] (E3) at (4,4) {};
\node[inner sep=1,fill=black,draw=black,circle] (E4) at (4,6) {};
\node[inner sep=1,fill=black,draw=black,circle] (E5) at (4,8) {};
\node[inner sep=1,fill=black,draw=black,circle] (F1) at (5,1) {};
\node[inner sep=1,fill=black,draw=black,circle] (F2) at (5,3) {};
\node[inner sep=1,fill=black,draw=black,circle] (F3) at (5,5) {};
\node[inner sep=1,fill=black,draw=black,circle] (F4) at (5,7) {};
\node[inner sep=1,fill=black,draw=black,circle] (F5) at (5,9) {};
\node[inner sep=1,fill=black,draw=black,circle] (G1) at (6,0) {};
\node[inner sep=1,fill=black,draw=black,circle] (G2) at (6,2) {};
\node[inner sep=1,fill=black,draw=black,circle] (G3) at (6,4) {};
\node[inner sep=1,fill=black,draw=black,circle] (G4) at (6,6) {};
\node[inner sep=1,fill=black,draw=black,circle] (G5) at (6,8) {};
\node[inner sep=1,fill=black,draw=black,circle] (H1) at (7,1) {};
\node[inner sep=1,fill=black,draw=black,circle] (H2) at (7,3) {};
\node[inner sep=1,fill=black,draw=black,circle] (H3) at (7,5) {};
\node[inner sep=1,fill=black,draw=black,circle] (H4) at (7,7) {};
\node[inner sep=1,fill=black,draw=black,circle] (H5) at (7,9) {};
\node[inner sep=1,fill=black,draw=black,circle] (I1) at (8,0) {};
\node[inner sep=1,fill=black,draw=black,circle] (I2) at (8,2) {};
\node[inner sep=1,fill=black,draw=black,circle] (I3) at (8,4) {};
\node[inner sep=1,fill=black,draw=black,circle] (I4) at (8,6) {};
\node[inner sep=1,fill=black,draw=black,circle] (I5) at (8,8) {};
\node[inner sep=1,fill=black,draw=black,circle] (J1) at (9,1) {};
\node[inner sep=1,fill=black,draw=black,circle] (J2) at (9,3) {};
\node[inner sep=1,fill=black,draw=black,circle] (J3) at (9,5) {};
\node[inner sep=1,fill=black,draw=black,circle] (J4) at (9,7) {};
\node[inner sep=1,fill=black,draw=black,circle] (J5) at (9,9) {};
\node[inner sep=1,fill=black,draw=black,circle] (K1) at (10,0) {};
\node[inner sep=1,fill=black,draw=black,circle] (K2) at (10,2) {};
\node[inner sep=1,fill=black,draw=black,circle] (K3) at (10,4) {};
\node[inner sep=1,fill=black,draw=black,circle] (K4) at (10,6) {};
\node[inner sep=1,fill=black,draw=black,circle] (K5) at (10,8) {};
\node[inner sep=1,fill=black,draw=black,circle] (L1) at (11,1) {};
\node[inner sep=1,fill=black,draw=black,circle] (L2) at (11,3) {};
\node[inner sep=1,fill=black,draw=black,circle] (L3) at (11,5) {};
\node[inner sep=1,fill=black,draw=black,circle] (L4) at (11,7) {};
\node[inner sep=1,fill=black,draw=black,circle] (L5) at (11,9) {};
\node[inner sep=1,fill=black,draw=black,circle] (M1) at (12,0) {};
\node[inner sep=1,fill=black,draw=black,circle] (M2) at (12,2) {};
\node[inner sep=1,fill=black,draw=black,circle] (M3) at (12,4) {};
\node[inner sep=1,fill=black,draw=black,circle] (M4) at (12,6) {};
\node[inner sep=1,fill=black,draw=black,circle] (M5) at (12,8) {};
\node[inner sep=1,fill=black,draw=black,circle] (N1) at (13,1) {};
\node[inner sep=1,fill=black,draw=black,circle] (N2) at (13,3) {};
\node[inner sep=1,fill=black,draw=black,circle] (N3) at (13,5) {};
\node[inner sep=1,fill=black,draw=black,circle] (N4) at (13,7) {};
\node[inner sep=1,fill=black,draw=black,circle] (N5) at (13,9) {};
\node[inner sep=1,fill=black,draw=black,circle,label=right:{$t_1$}] (O1) at (14,0) {};
\node[inner sep=1,fill=black,draw=black,circle,label=right:{$t_2$}] (O2) at (14,2) {};
\node[inner sep=1,fill=black,draw=black,circle,label=right:{$t_3$}] (O3) at (14,4) {};
\node[inner sep=1,fill=black,draw=black,circle,label=right:{$t_4$}] (O4) at (14,6) {};
\node[inner sep=1,fill=black,draw=black,circle,label=right:{$t_5$}] (O5) at (14,8) {};
\draw (A1) -- (B1) -- (C1) -- (D1) -- (E1) -- (F1) -- (G1) -- (H1) -- (I1) -- (J1) -- (K1) -- (L1) -- (M1) -- (N1) -- (O1);
\draw (A2) -- (B1) -- (C2) -- (D1) -- (E2) -- (F1) -- (G2) -- (H1) -- (I2) -- (J1) -- (K2) -- (L1) -- (M2) -- (N1) -- (O2);
\draw (A2) -- (B2) -- (C2) -- (D2) -- (E2) -- (F2) -- (G2) -- (H2) -- (I2) -- (J2) -- (K2) -- (L2) -- (M2) -- (N2) -- (O2);
\draw (A3) -- (B2) -- (C3) -- (D2) -- (E3) -- (F2) -- (G3) -- (H2) -- (I3) -- (J2) -- (K3) -- (L2) -- (M3) -- (N2) -- (O3);
\draw (A3) -- (B3) -- (C3) -- (D3) -- (E3) -- (F3) -- (G3) -- (H3) -- (I3) -- (J3) -- (K3) -- (L3) -- (M3) -- (N3) -- (O3);
\draw (A4) -- (B3) -- (C4) -- (D3) -- (E4) -- (F3) -- (G4) -- (H3) -- (I4) -- (J3) -- (K4) -- (L3) -- (M4) -- (N3) -- (O4);
\draw (A4) -- (B4) -- (C4) -- (D4) -- (E4) -- (F4) -- (G4) -- (H4) -- (I4) -- (J4) -- (K4) -- (L4) -- (M4) -- (N4) -- (O4);
\draw (A5) -- (B4) -- (C5) -- (D4) -- (E5) -- (F4) -- (G5) -- (H4) -- (I5) -- (J4) -- (K5) -- (L4) -- (M5) -- (N4) -- (O5);
\draw (A5) -- (B5) -- (C5) -- (D5) -- (E5) -- (F5) -- (G5) -- (H5) -- (I5) -- (J5) -- (K5) -- (L5) -- (M5) -- (N5) -- (O5);
\draw[dashed] (1,0) -- (1,9);
\draw[dashed] (13,0) -- (13,9);
\draw[very thick, red] (A1) -- (B1) -- (C1) -- (D1) -- (E2) -- (F2) -- (G2) -- (H1) -- (I1) -- (J1) -- (K2) -- (L2) -- (M2) -- (N1) -- (O1);
\draw[red,fill=red] (A1) circle (0.2);
\draw[red,fill=red] (B1) circle (0.2);
\draw[red,fill=red] (C1) circle (0.2);
\draw[red,fill=red] (D1) circle (0.2);
\draw[red,fill=red] (E2) circle (0.2);
\draw[red,fill=red] (F2) circle (0.2);
\draw[red,fill=red] (G2) circle (0.2);
\draw[red,fill=red] (H1) circle (0.2);
\draw[red,fill=red] (I1) circle (0.2);
\draw[red,fill=red] (J1) circle (0.2);
\draw[red,fill=red] (K2) circle (0.2);
\draw[red,fill=red] (L2) circle (0.2);
\draw[red,fill=red] (M2) circle (0.2);
\draw[red,fill=red] (N1) circle (0.2);
\draw[red,fill=red] (O1) circle (0.2);
\draw[very thick, blue] (A2) -- (B2) -- (C3) -- (D3) -- (E4) -- (F4) -- (G5) -- (H5) -- (I5) -- (J4) -- (K4) -- (L3) -- (M3) -- (N2) -- (O2);
\draw[blue,fill=blue] (A2) circle (0.2);
\draw[blue,fill=blue] (B2) circle (0.2);
\draw[blue,fill=blue] (C3) circle (0.2);
\draw[blue,fill=blue] (D3) circle (0.2);
\draw[blue,fill=blue] (E4) circle (0.2);
\draw[blue,fill=blue] (F4) circle (0.2);
\draw[blue,fill=blue] (G5) circle (0.2);
\draw[blue,fill=blue] (H5) circle (0.2);
\draw[blue,fill=blue] (I5) circle (0.2);
\draw[blue,fill=blue] (J4) circle (0.2);
\draw[blue,fill=blue] (K4) circle (0.2);
\draw[blue,fill=blue] (L3) circle (0.2);
\draw[blue,fill=blue] (M3) circle (0.2);
\draw[blue,fill=blue] (N2) circle (0.2);
\draw[blue,fill=blue] (O2) circle (0.2);
\end{tikzpicture}
\caption{In the proof of \cref{thm:main}: the bijection from non-intersecting tuples of paths $\Pi\colon (s_4,s_5) \to (t_4,t_5)$ in $G^6$ (left) to non-intersecting tuples of paths $\Pi\colon (s_1,s_2) \to (t_1,t_2)$ in $G^7$ (right). The bijection flips the paths upside-down, and then adds to every path an up step at the beginning and a down step at the end.}
\label{fig:dyck_paths_flip}
\end{figure}

\begin{remark}
Jang et al.~\cite{jang2022negative} extended the investigations of Cigler and Krattenthaler~\cite{cigler2020bounded} by considering the ``negative versions'' of other types of bounded lattice paths, such as Motzkin and Schr\"{o}der paths. It would be interesting to check whether \cref{thm:main} continues to apply to these other lattice path families. We have not thought deeply about these extensions.
\end{remark}

\subsection{Reciprocity for Schur function evaluations with repeated values} \label{sec:schur}

In this section we follow the standard terminology and notation for symmetric functions as laid out for instance in \cite[Chapter~7]{stanley1999ec2}.

A \dfn{partition} $\lambda = (\lambda_1,\lambda_2,\ldots)$ is an infinite, weakly decreasing sequence of nonnegative integers $\lambda_1 \geq \lambda_2 \geq \cdots$ that is eventually zero. The \dfn{size} of $\lambda$ is $|\lambda| \coloneqq \lambda_1+\lambda_2 +\cdots$ and the \dfn{length} of $\lambda$ is $\ell(\lambda) \coloneqq \min \{i\geq 0\colon \lambda_{i+1}=0\}$. Often we leave off the zeroes and write simply $\lambda = (\lambda_1,\ldots,\lambda_{\ell(\lambda)})$. The \dfn{Young diagram} of $\lambda$ is the top- and left-justified two-dimensional array of boxes which has $\lambda_i$ boxes in the $i$th row. For instance, the Young diagram of $(3,3,1)$ is
\[\ytableausetup{boxsize=1em} \ydiagram{3,3,1} \]
(Thus, we use ``English notation.'') The \dfn{transpose} (or \dfn{conjugate}) of $\lambda$, denoted $\lambda^t$, is the partition whose Young diagram is obtained from the Young diagram of $\lambda$ by reflecting across the main diagonal. For instance, the transpose of $(3,3,1)$ is $(3,2,2)$.

For $\lambda$ a partition, a \dfn{semistandard Young tableau (SSYT)} $T$ of shape $\lambda$ is a filling of the Young diagram of $\lambda$ with positive integers that is weakly increasing along rows and strictly increasing down columns. For instance, one SSYT of shape $(3,3,1)$ is
\[ T= \begin{ytableau}
1 & 1 & 2 \\
2 & 3 & 3 \\
5
\end{ytableau}\]
The \dfn{Schur function} $s_{\lambda}$ indexed by $\lambda$ is the generating function for SSYT of shape~$\lambda$:
\[ s_{\lambda}(x_1,x_2,\ldots) \coloneqq \sum_{\substack{\textrm{SSYT $T$}\\ \textrm{of shape $\lambda$}}} \, \prod_{i\geq 1} \, x_i^\textrm{$\#i$'s in $T$}.\] 
Schur functions are formal power series in infinitely many variables $x_1,x_2,\ldots$. 

For two partitions $\mu=(\mu_1,\mu_2,\ldots)$ and $\lambda=(\lambda_1,\lambda_2,\ldots)$ we write $\mu \subseteq \lambda$ to mean $\mu_i \leq \lambda_i$ for all $i\geq 1$. Equivalently, $\mu\subseteq \lambda$ if and only if the Young diagram of $\mu$ is contained in the Young diagram of $\lambda$. For partitions $\mu$, $\lambda$ with $\mu\subseteq \lambda$, the \dfn{skew shape} $\lambda/\mu$ is the set-theoretic difference of the Young diagrams of $\lambda$ and $\mu$. For instance, the skew shape $(3,3,2)/(1,1)$ is
\[ \ydiagram{1+2,1+2,2}\]
We use $|\lambda/\mu| \coloneqq |\lambda| - |\mu|$ to denote the number of boxes in $\lambda/\mu$. A semistandard Young tableau of shape $\lambda/\mu$ is a filling of the skew shape $\lambda/\mu$ with positive integers that is weakly increasing along rows and strictly increasing down columns, and the \dfn{skew Schur function} $s_{\lambda/\mu}$ is the corresponding generating function:
\[ s_{\lambda/\mu}(x_1,x_2,\ldots) \coloneqq \sum_{\substack{\textrm{SSYT $T$}\\ \textrm{of shape $\lambda/\mu$}}} \, \prod_{i\geq 1} \, x_i^\textrm{$\#i$'s in $T$}.\] 

Let $\mathbf{z}=(z_1,\ldots,z_k)\in\mathbb{C}^k$ be a tuple of complex numbers. For any $n \geq 0$, we set~$\mathbf{z}^{n} \coloneqq (z_1,\ldots,z_k, \, z_1,\ldots,z_k,\, \ldots, \, z_1,\ldots,z_k) \in \mathbb{C}^{n\times k}$, i.e., each $z_i$ appears $n$ times in~$\mathbf{z}^n$. And we set $\mathbf{z}_{\mathrm{rev}} \coloneqq (z_k,\ldots,z_1)$, i.e.,  $\mathbf{z}_{\mathrm{rev}}$ is the reverse of $\mathbf{z}$. 

For such a tuple $\mathbf{z}=(z_1,\ldots,z_k) \in \mathbb{C}^k$, we use $s_{\lambda/\mu}(\mathbf{z}) \in \mathbb{C}$ to denote the result of specializing $x_i \coloneqq z_i$ for~$1\leq i \leq k$ and $x_i \coloneqq 0$ for $i > k$ in the Schur function~$s_{\lambda/\mu}$. Our goal in this section is to deduce the following reciprocity result for Schur function evaluations with repeated values.

\begin{thm} \label{thm:schur}
Let $\lambda/\mu$ be any skew shape and let $\mathbf{z}\in \mathbb{C}^k$. Then $s_{\lambda/\mu}(\mathbf{z}^n)$ satisfies a linear recurrence as a function of $n$, and for any $n \geq 1$ we have
\[s_{\lambda/\mu}(\mathbf{z}^{-n}) = (-1)^{|\lambda/\mu|} \, s_{\lambda^t/\mu^t}(\mathbf{z}_{\mathrm{rev}}^{n}).\]
\end{thm}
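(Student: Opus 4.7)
The plan is to realize $s_{\lambda/\mu}(\mathbf{z}^n)$ as a weighted non-intersecting path count in a glued network $G^n$, and then apply \cref{thm:main}.

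First, I take $G$ to be the rectangular grid with vertex set $\{(i,h) : 1 \le i \le m,\ 0 \le h \le k\}$ for $m$ large (say $m \ge \ell(\lambda) + \lambda_1$), with an east edge from $(i,h)$ to $(i+1,h)$ of weight $z_{h+1}$ for $0 \le h \le k-1$, and a north edge from $(i,h)$ to $(i,h+1)$ of weight $1$. Declaring $s_i := (i,0)$ and $t_i := (i,k)$ gives an acyclic planar network whose boundary ordering $s_1,\ldots,s_m,t_m,\ldots,t_1$ is consistent with the paper's convention. A direct count shows $(\mathsf{P}_G)_{i,j} = h_{j-i}(z_1,\ldots,z_k)$ (the complete homogeneous symmetric polynomial, set to $0$ when $j < i$), so $\mathsf{P}_G$ is upper-triangular with $1$'s on the diagonal and $\det(\mathsf{P}_G) = 1$. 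The glued network $G^n$ is the same grid of height $kn$, in which the east-edge weights at heights $0,1,\ldots,kn-1$ form exactly the tuple $\mathbf{z}^n$.

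Next, fix $L \ge \ell(\lambda)$, pad $\mu$ with zeros to length $L$, and set
\[I := \{\mu_i - i + L + 1 : 1 \le i \le L\}, \qquad J := \{\lambda_i - i + L + 1 : 1 \le i \le L\}.\]
The classical bijection between non-intersecting lattice paths and SSYT of shape $\lambda/\mu$ with entries in $\{1,\ldots,kn\}$, combined with \cref{lem:LGV}, yields $f_G(I,J;n) = s_{\lambda/\mu}(\mathbf{z}^n)$; equivalently, this is the Jacobi--Trudi identity $s_{\lambda/\mu} = \det(h_{\lambda_i - \mu_j - i + j})$. By \cref{thm:main}, $f_G(I,J;n)$ satisfies a linear recurrence and
\[f_G(I,J;-n) = (-1)^{\sigma(I) + \sigma(J)} \, f_G(J^c,I^c;n).\]
A short calculation gives $\sigma(I) + \sigma(J) = |\lambda| + |\mu| + L(L+1) \equiv |\lambda/\mu| \pmod{2}$, matching the sign in the theorem.

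The step I expect to be the subtlest is identifying $f_G(J^c,I^c;n)$ with $s_{\lambda^t/\mu^t}(\mathbf{z}_{\mathrm{rev}}^n)$. I would do this via the standard duality between complementary subsets in the Maya encoding and transpose partitions: taking $m = L + \lambda_1$, a direct computation shows that, listed in increasing order,
\[I^c = \{-\mu^t_j + j + L : 1 \le j \le m - L\}, \qquad J^c = \{-\lambda^t_j + j + L : 1 \le j \le m - L\},\]
so that complementing in $[m]$ and reflecting via $x \mapsto m + 1 - x$ converts the encoding of $(\mu,\lambda)$ into that of $(\mu^t,\lambda^t)$ for length parameter $m - L$. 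Plugging these formulas into the LGV determinant for $f_G(J^c,I^c;n)$ and rearranging rows and columns produces the Jacobi--Trudi expression $\det(h_{\lambda^t_p - \mu^t_q - p + q}(\mathbf{z}^n)) = s_{\lambda^t/\mu^t}(\mathbf{z}^n)$. Finally, since Schur functions are symmetric and the tuples $\mathbf{z}^n$ and $\mathbf{z}_{\mathrm{rev}}^n$ are permutations of one another as multisets, $s_{\lambda^t/\mu^t}(\mathbf{z}^n) = s_{\lambda^t/\mu^t}(\mathbf{z}_{\mathrm{rev}}^n)$, completing the proof.
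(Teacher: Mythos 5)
Your proposal is correct and follows the same strategy as the paper: build a grid network with $\det(\mathsf{P}_G)=1$ whose non-intersecting paths between the source/sink sets $I,J$ encoded by $\mu,\lambda$ enumerate $s_{\lambda/\mu}(\mathbf{z}^n)$, apply \cref{thm:main}, and check that $\sigma(I)+\sigma(J)\equiv|\lambda/\mu|\pmod 2$. (Your network is the transpose of the paper's --- weights on horizontal rather than vertical steps --- and you verify $\det(\mathsf{P}_G)=1$ from upper-triangularity of $\mathsf{P}_G$ rather than from the uniqueness of the full non-intersecting tuple; both are cosmetic differences.) The one step you handle genuinely differently is the identification $f_G(J^c,I^c;n)=s_{\lambda^t/\mu^t}(\mathbf{z}_{\mathrm{rev}}^n)$: you compute $I^c$ and $J^c$ explicitly via the standard complementation formula for the sets $\{\mu_i-i+L+1\}$ (your formulas check out, e.g.\ for $\mu=(2,1)$, $L=2$, $m=5$ one gets $I^c=\{1,3,5\}=\{L+j-\mu^t_j\}$), substitute into the LGV determinant to land on the Jacobi--Trudi determinant $\det(h_{\lambda^t_p-\mu^t_q-p+q}(\mathbf{z}^n))=s_{\lambda^t/\mu^t}(\mathbf{z}^n)$, and then invoke the symmetry of Schur functions to replace $\mathbf{z}^n$ by $\mathbf{z}_{\mathrm{rev}}^n$. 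The paper instead argues bijectively: the complementary paths correspond to SSYTs of the $180^\circ$ rotation of $\lambda^t/\mu^t$, and the rotate-and-complement involution $T\mapsto\widehat{T}$ turns these into SSYTs of shape $\lambda^t/\mu^t$ with weight $\mathbf{z}_{\mathrm{rev}}$, which is why the paper never needs the symmetry of Schur functions and why $\mathbf{z}_{\mathrm{rev}}$ appears naturally in the statement. Your algebraic route is shorter if one is willing to quote Jacobi--Trudi and the transpose/complement duality of beta-sequences; the paper's route stays entirely combinatorial and self-contained at that step.
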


Of course, $ s_{\lambda^t/\mu^t}(\mathbf{z}_{\mathrm{rev}}^{n})= s_{\lambda^t/\mu^t}(\mathbf{z}^{n})$ since Schur functions are symmetric 
functions. But we wrote the statement of \cref{thm:schur} in the way we did because in our (first) proof of this theorem, via \cref{thm:main}, we will not use the fact that Schur functions are symmetric and it is $\mathbf{z}_{\mathrm{rev}}^{n}$ which arises naturally. Also, we will see later that $s_{\lambda/\mu}(\mathbf{z}^{n})$ is in fact a polynomial in $n$. But again, we said in \cref{thm:schur} merely that it satisfies a linear recurrence because we wish to apply \cref{thm:main}.

\begin{example}
For a partition $\lambda=(1^m)$ which is a single column, we have $s_{1^m} = e_m$, the \dfn{$m$th elementary symmetric function}
\[e_m \coloneqq \sum_{1 \leq i_1 < i_2 < \cdots < i_m} x_{i_1} x_{i_2} \cdots x_{i_m}.\]
Similarly, for a partition $\lambda = (m)$ which is a single row, we have $s_{m}=h_m$, the \dfn{$m$th complete homogeneous symmetric function}
\[h_m \coloneqq \sum_{1 \leq i_1 \leq i_2 \leq \cdots \leq i_m} x_{i_1} x_{i_2} \cdots x_{i_m}.\]
Hence, the single column/row version of \cref{thm:schur} says that
\[ e_m(\mathbf{z}^{-n}) = (-1)^m \, h_m(\mathbf{z}^n).\]
In particular, taking $\mathbf{z} = 1$ we have
\[e_m(1^n) = \binom{m}{n}, \qquad h_m(1^n) = \mch{m}{n}.\]
Thus, \cref{thm:schur} recovers the prototypical combinatorial reciprocity result~\eqref{eqn:ur_reciprocity}.
\end{example}

\begin{remark} \label{rem:order_poly}
Let us discuss the case $\mathbf{z}=1$ of \cref{thm:schur}. Then $s_{\lambda/\mu}(1^n)$ is the \dfn{order polynomial} $\Omega_{(P,\omega)}(n)$ for a certain labeled poset $(P,\omega)$ corresponding to the shape~$\lambda/\mu$: see~\cite[\S7.19]{stanley1999ec2} and~\cite[\S3.15]{stanley2012ec1}. In this case, \cref{thm:schur} follows from the order polynomial reciprocity theorem $\Omega_{(P,\omega)}(-n) = (-1)^{\#P} \, \Omega_{(P,\overline{\omega})} (n)$ \cite[Corollary~3.15.12]{stanley2012ec1}. For ordinary (i.e., non-skew) Schur functions $s_{\lambda}$, there is even an explicit product formula for the evaluation $s_{\lambda}(1^n)$ called the ``hook-content formula''~\cite[Corollary~7.21.4]{stanley1999ec2}. The reciprocity result $s_{\lambda}(1^{-n})=(-1)^{|\lambda|} \, s_{\lambda^t}(1^n)$ also follows immediately from the hook-content formula.
\end{remark}

\begin{remark} \label{rem:kronecker}
It is possible to use symmetric function theory to reduce \cref{thm:schur} to the case discussed in \cref{rem:order_poly}, as we now quickly explain. For simplicity, let us only consider ordinary Schur functions; since skew Schur functions are linear combinations of ordinary Schur functions, this is sufficient. Let $g_{\lambda,\mu,\nu}$ denote the \dfn{Kronecker coefficients}, which are the structure constants (in the basis of Schur functions) for the ``internal product'' of symmetric functions: $s_{\lambda} \ast s_{\mu} = \sum_{\nu} g_{\lambda,\mu,\nu} \, s_{\nu}$. Let $\mathbf{x}=x_1,x_2,\ldots$ and $\mathbf{y}=y_1,y_2,\ldots$ be two independent infinite sets of variables, and let $\mathbf{xy}=x_1y_1,x_1y_2,\ldots$ denote the infinite set of variables $x_iy_j$ for all $i,j \geq 1$. It is known (see, e.g., \cite[Exercise 7.78(c)]{stanley1999ec2}) that
\[ s_{\lambda}(\mathbf{xy}) = \sum_{\mu,\nu} g_{\lambda,\mu,\nu} \, s_{\mu}(\mathbf{x}) \, s_{\nu}(\mathbf{y}).\]
Let $\mathbf{z} \in \mathbb{C}^k$ and set $\mathbf{x}\coloneqq \mathbf{z}$ and $\mathbf{y} \coloneqq 1^n$. Then we obtain
\[ s_{\lambda}(\mathbf{z}^n) = \sum_{\mu,\nu} g_{\lambda,\mu,\nu} \, s_{\mu}(\mathbf{z}) \, s_{\nu}(1^n).\]
Hence, the $s_{\lambda}(\mathbf{z}^n)$ are linear combinations of the polynomials $s_{\nu}(1^n)$. In this way, we can indeed reduce \cref{thm:schur} to the case discussed in \cref{rem:order_poly}. However, as we will see below, there is actually a much more straightforward way to establish \cref{thm:schur} using symmetric function theory.
\end{remark}

We proceed to prove \cref{thm:schur} using \cref{thm:main}.

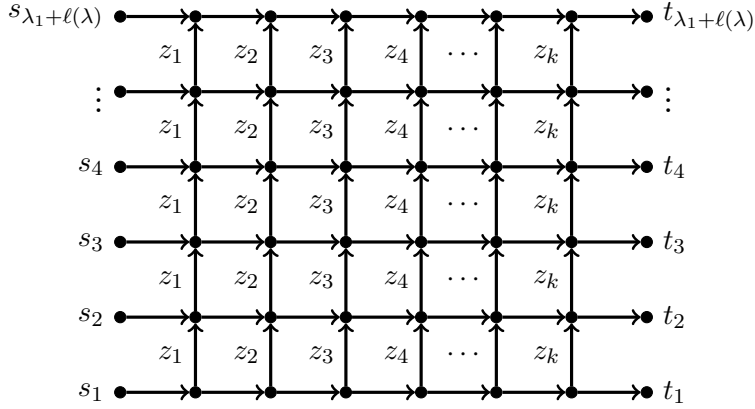
\begin{figure}
\begin{tikzpicture}
\node[inner sep=1.5,fill=black,draw=black,circle,label=left:{$s_1$}] (A1) at (0,0) {};
\node[inner sep=1.5,fill=black,draw=black,circle,label=left:{$s_2$}] (A2) at (0,1) {};
\node[inner sep=1.5,fill=black,draw=black,circle,label=left:{$s_3$}] (A3) at (0,2) {};
\node[inner sep=1.5,fill=black,draw=black,circle,label=left:{$s_4$}] (A4) at (0,3) {};
\node[inner sep=1.5,fill=black,draw=black,circle,label=left:{\Large $\vdots$}] (A5) at (0,4) {};
\node[inner sep=1.5,fill=black,draw=black,circle,label=left:{$s_{\lambda_1+\ell(\lambda)}$}] (A6) at (0,5) {};
\node[inner sep=1.5,fill=black,draw=black,circle] (B1) at (1,0) {};
\node[inner sep=1.5,fill=black,draw=black,circle] (B2) at (1,1) {};
\node[inner sep=1.5,fill=black,draw=black,circle] (B3) at (1,2) {};
\node[inner sep=1.5,fill=black,draw=black,circle] (B4) at (1,3) {};
\node[inner sep=1.5,fill=black,draw=black,circle] (B5) at (1,4) {};
\node[inner sep=1.5,fill=black,draw=black,circle] (B6) at (1,5) {};
\node[inner sep=1.5,fill=black,draw=black,circle] (C1) at (2,0) {};
\node[inner sep=1.5,fill=black,draw=black,circle] (C2) at (2,1) {};
\node[inner sep=1.5,fill=black,draw=black,circle] (C3) at (2,2) {};
\node[inner sep=1.5,fill=black,draw=black,circle] (C4) at (2,3) {};
\node[inner sep=1.5,fill=black,draw=black,circle] (C5) at (2,4) {};
\node[inner sep=1.5,fill=black,draw=black,circle] (C6) at (2,5) {};
\node[inner sep=1.5,fill=black,draw=black,circle] (D1) at (3,0) {};
\node[inner sep=1.5,fill=black,draw=black,circle] (D2) at (3,1) {};
\node[inner sep=1.5,fill=black,draw=black,circle] (D3) at (3,2) {};
\node[inner sep=1.5,fill=black,draw=black,circle] (D4) at (3,3) {};
\node[inner sep=1.5,fill=black,draw=black,circle] (D5) at (3,4) {};
\node[inner sep=1.5,fill=black,draw=black,circle] (D6) at (3,5) {};
\node[inner sep=1.5,fill=black,draw=black,circle] (E1) at (4,0) {};
\node[inner sep=1.5,fill=black,draw=black,circle] (E2) at (4,1) {};
\node[inner sep=1.5,fill=black,draw=black,circle] (E3) at (4,2) {};
\node[inner sep=1.5,fill=black,draw=black,circle] (E4) at (4,3) {};
\node[inner sep=1.5,fill=black,draw=black,circle] (E5) at (4,4) {};
\node[inner sep=1.5,fill=black,draw=black,circle] (E6) at (4,5) {};
\node[inner sep=1.5,fill=black,draw=black,circle] (F1) at (5,0) {};
\node[inner sep=1.5,fill=black,draw=black,circle] (F2) at (5,1) {};
\node[inner sep=1.5,fill=black,draw=black,circle] (F3) at (5,2) {};
\node[inner sep=1.5,fill=black,draw=black,circle] (F4) at (5,3) {};
\node[inner sep=1.5,fill=black,draw=black,circle] (F5) at (5,4) {};
\node[inner sep=1.5,fill=black,draw=black,circle] (F6) at (5,5) {};
\node[inner sep=1.5,fill=black,draw=black,circle] (G1) at (6,0) {};
\node[inner sep=1.5,fill=black,draw=black,circle] (G2) at (6,1) {};
\node[inner sep=1.5,fill=black,draw=black,circle] (G3) at (6,2) {};
\node[inner sep=1.5,fill=black,draw=black,circle] (G4) at (6,3) {};
\node[inner sep=1.5,fill=black,draw=black,circle] (G5) at (6,4) {};
\node[inner sep=1.5,fill=black,draw=black,circle] (G6) at (6,5) {};
\node[inner sep=1.5,fill=black,draw=black,circle,label=right:{$t_1$}] (H1) at (7,0) {};
\node[inner sep=1.5,fill=black,draw=black,circle,label=right:{$t_2$}] (H2) at (7,1) {};
\node[inner sep=1.5,fill=black,draw=black,circle,label=right:{$t_3$}] (H3) at (7,2) {};
\node[inner sep=1.5,fill=black,draw=black,circle,label=right:{$t_4$}] (H4) at (7,3) {};
\node[inner sep=1.5,fill=black,draw=black,circle,label=right:{\Large $\vdots$}] (H5) at (7,4) {};
\node[inner sep=1.5,fill=black,draw=black,circle,label=right:{$t_{\lambda_1+\ell(\lambda)}$}] (H6) at (7,5) {};
\draw[->,very thick] (A1) edge (B1) (B1) edge (C1) (C1) edge (D1) (D1) edge (E1) (E1) edge (F1) (F1) edge (G1) (G1) edge (H1);
\draw[->,very thick] (A2) edge (B2) (B2) edge (C2) (C2) edge (D2) (D2) edge (E2) (E2) edge (F2) (F2) edge (G2) (G2) edge (H2);
\draw[->,very thick] (A3) edge (B3) (B3) edge (C3) (C3) edge (D3) (D3) edge (E3) (E3) edge (F3) (F3) edge (G3) (G3) edge (H3);
\draw[->,very thick] (A4) edge (B4) (B4) edge (C4) (C4) edge (D4) (D4) edge (E4) (E4) edge (F4) (F4) edge (G4) (G4) edge (H4);
\draw[->,very thick] (A5) edge (B5) (B5) edge (C5) (C5) edge (D5) (D5) edge (E5) (E5) edge (F5) (F5) edge (G5) (G5) edge (H5);
\draw[->,very thick] (A6) edge (B6) (B6) edge (C6) (C6) edge (D6) (D6) edge (E6) (E6) edge (F6) (F6) edge (G6) (G6) edge (H6);
\draw[->,very thick] (B1) edge node[left] {$z_1$} (B2) (B2) edge node[left] {$z_1$} (B3) (B3) edge node[left] {$z_1$} (B4) (B4) edge node[left] {$z_1$} (B5) (B5) edge node[left] {$z_1$} (B6);
\draw[->,very thick] (C1) edge node[left] {$z_2$} (C2) (C2) edge node[left] {$z_2$} (C3) (C3) edge node[left] {$z_2$} (C4) (C4) edge node[left] {$z_2$} (C5) (C5) edge node[left] {$z_2$} (C6);
\draw[->,very thick] (D1) edge node[left] {$z_3$} (D2) (D2) edge node[left] {$z_3$} (D3) (D3) edge node[left] {$z_3$} (D4) (D4) edge node[left] {$z_3$} (D5) (D5) edge node[left] {$z_3$} (D6);
\draw[->,very thick] (E1) edge node[left] {$z_4$} (E2) (E2) edge node[left] {$z_4$} (E3) (E3) edge node[left] {$z_4$} (E4) (E4) edge node[left] {$z_4$} (E5) (E5) edge node[left] {$z_4$} (E6);
\draw[->,very thick] (F1) edge node[left] {$\cdots$} (F2) (F2) edge node[left] {$\cdots$} (F3) (F3) edge node[left] {$\cdots$} (F4) (F4) edge node[left] {$\cdots$} (F5) (F5) edge node[left] {$\cdots$} (F6);
\draw[->,very thick] (G1) edge node[left] {$z_k$} (G2) (G2) edge node[left] {$z_k$} (G3) (G3) edge node[left] {$z_k$} (G4) (G4) edge node[left] {$z_k$} (G5) (G5) edge node[left] {$z_k$} (G6);
\end{tikzpicture}
\caption{The acyclic planar network used in the proof of \cref{thm:schur}.}
\label{fig:schur}
\end{figure}

\begin{proof}[Proof of \cref{thm:schur}]
Let $G$ be the acyclic planar network depicted in \cref{fig:schur}. Notice that $G$ has sources $s_1,\ldots,s_{\lambda_1+\ell(\lambda)}$ and sinks $t_1,\ldots,t_{\lambda_1+\ell(\lambda)}$, and the weight of each horizontal edges is one while the weight of a vertical edge in the $i$th column is $z_i$. There is a unique non-intersecting tuple of paths in $G$ connecting all of the sources to all of the sinks, and all edges in this tuple of paths have weight one. Hence, as discussed in \cref{rem:determinant}, we have that $\det(\mathsf{P}_G)=1$. 

It is well known that non-intersecting tuples of paths in $G$ are in bijection with semistandard Young tableaux: see, e.g.,~\cite[1st proof of Theorem 7.16.1]{stanley1999ec2}. Specifically, with $\ell\coloneqq \ell(\lambda)$, let $I=\{i_1 < \cdots < i_\ell\} \coloneqq \{\mu_{\ell}+1,\mu_{\ell-1}+2,\ldots,\mu_1+\ell\}$ and $J=\{j_1 < \cdots < j_{\ell}\} \coloneqq \{\lambda_{\ell}+1,\lambda_{\ell-1}+2,\ldots,\lambda_1+\ell\}$. Then non-intersecting tuples of paths $\Pi\colon(s_{i_1},\ldots,s_{i_\ell})\to(t_{j_1},\ldots,t_{j_\ell})$ in $G$ correspond to SSYTs of shape~$\lambda/\mu$ with entries in $[k]$: the bijection is depicted in \cref{fig:schur_bij}. The bijection is weight-preserving and hence, summing over such~$\Pi$, we obtain $\sum_{\Pi} w(\Pi)=s_{\lambda/\mu}(\mathbf{z})$. What is more, we have $f_G(I,J;n) = s_{\lambda/\mu}(\mathbf{z}^n)$ for exactly the same reason. So, by \cref{thm:main}, $s_{\lambda/\mu}(\mathbf{z}^n)$ satisfies a linear recurrence as a function of $n$.

Moreover, by \cref{thm:main}, $s_{\lambda/\mu}(\mathbf{z}^{-n})= (-1)^{|\lambda/\mu|} \, f_{G}([\lambda_1+\ell]\setminus J,[\lambda_1+\ell]\setminus I;n)$. Now, we would like to say that non-intersecting tuples of paths in $G$ connecting sources indexed by $[\lambda_1+\ell]\setminus J$ to sinks indexed by $[\lambda_1+\ell]\setminus I$ correspond to SSYTs of shape $\lambda^{t}/\mu^{t}$. But this is not quite right: they in fact correspond to SSYTs whose shape is the $180^\circ$ rotation of $\lambda^{t}/\mu^{t}$. Nevertheless, given a tableau $T$ with entries in~$[k]$ we can obtain another tableau $\widehat{T}$ by rotating $T$ $180^{\circ}$ degrees and replacing each entry~$i$ with $k+1-i$. Since this involution $T\mapsto \widehat{T}$ reverses the weight of the tableau, we obtain $f_{G}([\lambda_1+\ell]\setminus J,[\lambda_1+\ell]\setminus I;n) = s_{\lambda^t/\mu^t}(\mathbf{z}_{\mathrm{rev}}^n)$. Therefore, we conclude $s_{\lambda/\mu}(\mathbf{z}^{-n})= (-1)^{|\lambda/\mu|} \, s_{\lambda^t/\mu^t}(\mathbf{z}_{\mathrm{rev}}^n)$, as desired.
\end{proof}

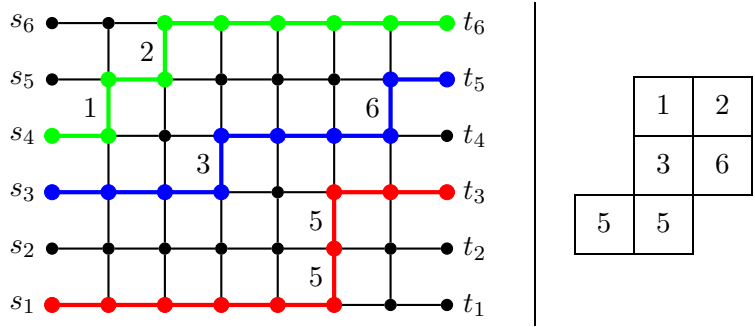
\begin{figure}
\begin{tikzpicture}[scale=0.75]
\node[inner sep=1.5,fill=black,draw=black,circle,label=left:{$s_1$}] (A1) at (0,0) {};
\node[inner sep=1.5,fill=black,draw=black,circle,label=left:{$s_2$}] (A2) at (0,1) {};
\node[inner sep=1.5,fill=black,draw=black,circle,label=left:{$s_3$}] (A3) at (0,2) {};
\node[inner sep=1.5,fill=black,draw=black,circle,label=left:{$s_4$}] (A4) at (0,3) {};
\node[inner sep=1.5,fill=black,draw=black,circle,label=left:{$s_5$}] (A5) at (0,4) {};
\node[inner sep=1.5,fill=black,draw=black,circle,label=left:{$s_6$}] (A6) at (0,5) {};
\node[inner sep=1.5,fill=black,draw=black,circle] (B1) at (1,0) {};
\node[inner sep=1.5,fill=black,draw=black,circle] (B2) at (1,1) {};
\node[inner sep=1.5,fill=black,draw=black,circle] (B3) at (1,2) {};
\node[inner sep=1.5,fill=black,draw=black,circle] (B4) at (1,3) {};
\node[inner sep=1.5,fill=black,draw=black,circle] (B5) at (1,4) {};
\node[inner sep=1.5,fill=black,draw=black,circle] (B6) at (1,5) {};
\node[inner sep=1.5,fill=black,draw=black,circle] (C1) at (2,0) {};
\node[inner sep=1.5,fill=black,draw=black,circle] (C2) at (2,1) {};
\node[inner sep=1.5,fill=black,draw=black,circle] (C3) at (2,2) {};
\node[inner sep=1.5,fill=black,draw=black,circle] (C4) at (2,3) {};
\node[inner sep=1.5,fill=black,draw=black,circle] (C5) at (2,4) {};
\node[inner sep=1.5,fill=black,draw=black,circle] (C6) at (2,5) {};
\node[inner sep=1.5,fill=black,draw=black,circle] (D1) at (3,0) {};
\node[inner sep=1.5,fill=black,draw=black,circle] (D2) at (3,1) {};
\node[inner sep=1.5,fill=black,draw=black,circle] (D3) at (3,2) {};
\node[inner sep=1.5,fill=black,draw=black,circle] (D4) at (3,3) {};
\node[inner sep=1.5,fill=black,draw=black,circle] (D5) at (3,4) {};
\node[inner sep=1.5,fill=black,draw=black,circle] (D6) at (3,5) {};
\node[inner sep=1.5,fill=black,draw=black,circle] (E1) at (4,0) {};
\node[inner sep=1.5,fill=black,draw=black,circle] (E2) at (4,1) {};
\node[inner sep=1.5,fill=black,draw=black,circle] (E3) at (4,2) {};
\node[inner sep=1.5,fill=black,draw=black,circle] (E4) at (4,3) {};
\node[inner sep=1.5,fill=black,draw=black,circle] (E5) at (4,4) {};
\node[inner sep=1.5,fill=black,draw=black,circle] (E6) at (4,5) {};
\node[inner sep=1.5,fill=black,draw=black,circle] (F1) at (5,0) {};
\node[inner sep=1.5,fill=black,draw=black,circle] (F2) at (5,1) {};
\node[inner sep=1.5,fill=black,draw=black,circle] (F3) at (5,2) {};
\node[inner sep=1.5,fill=black,draw=black,circle] (F4) at (5,3) {};
\node[inner sep=1.5,fill=black,draw=black,circle] (F5) at (5,4) {};
\node[inner sep=1.5,fill=black,draw=black,circle] (F6) at (5,5) {};
\node[inner sep=1.5,fill=black,draw=black,circle] (G1) at (6,0) {};
\node[inner sep=1.5,fill=black,draw=black,circle] (G2) at (6,1) {};
\node[inner sep=1.5,fill=black,draw=black,circle] (G3) at (6,2) {};
\node[inner sep=1.5,fill=black,draw=black,circle] (G4) at (6,3) {};
\node[inner sep=1.5,fill=black,draw=black,circle] (G5) at (6,4) {};
\node[inner sep=1.5,fill=black,draw=black,circle] (G6) at (6,5) {};
\node[inner sep=1.5,fill=black,draw=black,circle,label=right:{$t_1$}] (H1) at (7,0) {};
\node[inner sep=1.5,fill=black,draw=black,circle,label=right:{$t_2$}] (H2) at (7,1) {};
\node[inner sep=1.5,fill=black,draw=black,circle,label=right:{$t_3$}] (H3) at (7,2) {};
\node[inner sep=1.5,fill=black,draw=black,circle,label=right:{$t_4$}] (H4) at (7,3) {};
\node[inner sep=1.5,fill=black,draw=black,circle,label=right:{$t_5$}] (H5) at (7,4) {};
\node[inner sep=1.5,fill=black,draw=black,circle,label=right:{$t_6$}] (H6) at (7,5) {};
\draw[thick] (A1) edge (B1) (B1) edge (C1) (C1) edge (D1) (D1) edge (E1) (E1) edge (F1) (F1) edge (G1) (G1) edge (H1);
\draw[thick] (A2) edge (B2) (B2) edge (C2) (C2) edge (D2) (D2) edge (E2) (E2) edge (F2) (F2) edge (G2) (G2) edge (H2);
\draw[thick] (A3) edge (B3) (B3) edge (C3) (C3) edge (D3) (D3) edge (E3) (E3) edge (F3) (F3) edge (G3) (G3) edge (H3);
\draw[thick] (A4) edge (B4) (B4) edge (C4) (C4) edge (D4) (D4) edge (E4) (E4) edge (F4) (F4) edge (G4) (G4) edge (H4);
\draw[thick] (A5) edge (B5) (B5) edge (C5) (C5) edge (D5) (D5) edge (E5) (E5) edge (F5) (F5) edge (G5) (G5) edge (H5);
\draw[thick] (A6) edge (B6) (B6) edge (C6) (C6) edge (D6) (D6) edge (E6) (E6) edge (F6) (F6) edge (G6) (G6) edge (H6);
\draw[thick] (B1) edge (B2) (B2) edge (B3) (B3) edge (B4) (B4) edge node[left] {$1$} (B5) (B5) edge (B6);
\draw[thick] (C1) edge (C2) (C2) edge (C3) (C3) edge (C4) (C4) edge (C5) (C5) edge node[left] {$2$} (C6);
\draw[thick] (D1) edge (D2) (D2) edge (D3) (D3) edge node[left] {$3$} (D4) (D4) edge (D5) (D5) edge (D6);
\draw[thick] (E1) edge (E2) (E2) edge (E3) (E3) edge (E4) (E4) edge (E5) (E5) edge (E6);
\draw[thick] (F1) edge node[left] {$5$} (F2) (F2) edge node[left] {$5$} (F3) (F3) edge (F4) (F4) edge (F5) (F5) edge (F6);
\draw[thick] (G1) edge (G2) (G2) edge (G3) (G3) edge (G4) (G4) edge node[left] {$6$} (G5) (G5) edge (G6);
\draw[ultra thick,red] (A1) -- (B1) -- (C1) -- (D1) --(E1) -- (F1) -- (F2) -- (F3) -- (G3) -- (H3);
\node[inner sep=2,fill=red,draw=red,circle] at (A1) {};
\node[inner sep=2,fill=red,draw=red,circle] at (B1) {};
\node[inner sep=2,fill=red,draw=red,circle] at (C1) {};
\node[inner sep=2,fill=red,draw=red,circle] at (D1) {};
\node[inner sep=2,fill=red,draw=red,circle] at (E1) {};
\node[inner sep=2,fill=red,draw=red,circle] at (F1) {};
\node[inner sep=2,fill=red,draw=red,circle] at (F2) {};
\node[inner sep=2,fill=red,draw=red,circle] at (F3) {};
\node[inner sep=2,fill=red,draw=red,circle] at (G3) {};
\node[inner sep=2,fill=red,draw=red,circle] at (H3) {};
\draw[ultra thick,blue] (A3) -- (B3) -- (C3) -- (D3) -- (D4) -- (E4) -- (F4) -- (G4) -- (G5) -- (H5);
\node[inner sep=2,fill=blue,draw=blue,circle] at (A3) {};
\node[inner sep=2,fill=blue,draw=blue,circle] at (B3) {};
\node[inner sep=2,fill=blue,draw=blue,circle] at (C3) {};
\node[inner sep=2,fill=blue,draw=blue,circle] at (D3) {};
\node[inner sep=2,fill=blue,draw=blue,circle] at (D4) {};
\node[inner sep=2,fill=blue,draw=blue,circle] at (E4) {};
\node[inner sep=2,fill=blue,draw=blue,circle] at (F4) {};
\node[inner sep=2,fill=blue,draw=blue,circle] at (G4) {};
\node[inner sep=2,fill=blue,draw=blue,circle] at (G5) {};
\node[inner sep=2,fill=blue,draw=blue,circle] at (H5) {};
\draw[ultra thick,green] (A4) -- (B4) -- (B5) -- (C5) -- (C6) -- (D6) -- (E6) -- (F6) -- (G6) -- (H6);
\node[inner sep=2,fill=green,draw=green,circle] at (A4) {};
\node[inner sep=2,fill=green,draw=green,circle] at (B4) {};
\node[inner sep=2,fill=green,draw=green,circle] at (B5) {};
\node[inner sep=2,fill=green,draw=green,circle] at (C5) {};
\node[inner sep=2,fill=green,draw=green,circle] at (C6) {};
\node[inner sep=2,fill=green,draw=green,circle] at (D6) {};
\node[inner sep=2,fill=green,draw=green,circle] at (E6) {};
\node[inner sep=2,fill=green,draw=green,circle] at (F6) {};
\node[inner sep=2,fill=green,draw=green,circle] at (G6) {};
\node[inner sep=2,fill=green,draw=green,circle] at (H6) {};
\end{tikzpicture} \quad \vrule \quad \ytableausetup{boxsize=2em}\begin{tikzpicture} \node at (0,0) {$\begin{ytableau}
\none & 1 & 2 \\
\none & 3 & 6 \\
5 & 5
\end{ytableau}$};
\node at (0,-2) {};
\end{tikzpicture}
\caption{In the proof of \cref{thm:schur}: the bijection from non-intersecting tuples of paths $\Pi\colon (s_1,s_3,s_4)\to(t_3,t_5,t_6)$ in $G$ (left) to SSYTs of shape $(3,2,2)/(1,1)$ (right). Under this bijection, each row of the SSYT records the vertical steps of the corresponding path.}
\label{fig:schur_bij}
\end{figure}

Actually, \cref{thm:schur} is a corollary of a more general reciprocity result for arbitrary symmetric functions, as we now explain.

Let $\Lambda$ denote the ring of symmetric functions. The elements $f\in \Lambda$ are bounded degree formal power series $f=\sum_{\alpha=(\alpha_1,\alpha_2,\ldots)\in\mathbb{N}^{\infty}}c_{\alpha} \, x_1^{\alpha_1}x_2^{\alpha_2}\cdots$ in infinitely many variables $x_1,x_2,\ldots$ that are invariant under arbitrary permutation of the variables. Following~\cite[Chapter~7]{stanley1999ec2}, we take the ring of coefficients of $\Lambda$ to be the rational numbers, i.e., $c_\alpha\in\mathbb{Q}$ for all $\alpha$. Hence, $\Lambda$ is a $\mathbb{Q}$-algebra. 

The ring of symmetric functions $\Lambda$ is a polynomial ring in the elementary symmetric functions $e_m$, and it is also a polynomial ring in the complete homogeneous symmetric functions $h_m$. Moreover, there is a canonical involutive algebra automorphism $\omega\colon \Lambda \to \Lambda$ defined by $\omega(e_m) \coloneqq h_m$ for all $m \geq 0$.

For any symmetric function $f \in \Lambda$ and $\mathbf{z}\in\mathbb{C}^k$, we let~$f(\mathbf{z}) \in \mathbb{C}$ denote the result of specializing $x_i \coloneqq z_i$ for $1 \leq i \leq k$ and $x_i \coloneqq 0$ for $i > k$ in~$f$. We have the following known reciprocity result concerning evaluations with repeated values for any symmetric function.

\begin{thm}[{\cite[Exercise 23]{stanley2022ec2supp}}] \label{thm:sym}
Let $f \in \Lambda$ be any symmetric function and let $\mathbf{z} \in \mathbb{C}^k$. Then $f(\mathbf{z}^n)$ is a polynomial in $n$. Moreover, suppose that $f$ is homogeneous of degree $m$. Then $f(\mathbf{z}^{-n}) = (-1)^m\, (\omega f)(\mathbf{z}^n)$.
\end{thm}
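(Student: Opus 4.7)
The plan is to reduce the general statement to the case $f = h_m$, exploiting the fact that $\Lambda$ is freely generated as a $\mathbb{Q}$-algebra by $h_1,h_2,\ldots$, together with the fact that for each fixed $n \geq 0$ the map $\phi_n\colon f \mapsto f(\mathbf{z}^n)$ is a ring homomorphism $\Lambda \to \mathbb{C}$. This will give a single ring homomorphism $\Phi\colon \Lambda \to \mathrm{Fun}(\mathbb{N},\mathbb{C})$ whose image turns out to land in the polynomial subring $\mathbb{Q}[n] \otimes \mathbb{C}$, thereby defining $f(\mathbf{z}^{-n})$ as the polynomial extension.

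First I would verify polynomiality (and the reciprocity) on the generators $h_m$ by a generating-function calculation. Specializing the classical identity $\sum_{m \geq 0} h_m(x_1,x_2,\ldots)\,t^m = \prod_{i \geq 1} (1-x_i t)^{-1}$ to $\mathbf{z}^n$ gives
\[ \sum_{m \geq 0} h_m(\mathbf{z}^n)\,t^m \;=\; \prod_{i=1}^{k}(1-z_i t)^{-n} \;=\; \prod_{i=1}^{k}\sum_{j \geq 0}\binom{n+j-1}{j} z_i^j\,t^j. \]
Since each $\binom{n+j-1}{j}$ is a polynomial in $n$ of degree $j$, the coefficient of $t^m$ on the left is a polynomial $P_m(n)$ in $n$ of degree at most $m$, which proves polynomiality for $f = h_m$. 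For reciprocity, use the elementary identity $\binom{-n+j-1}{j} = (-1)^j\binom{n}{j}$ to see that the polynomial $P_m(-n)$ is the coefficient of $t^m$ in the formal product $\prod_i(1-z_i t)^n$. Comparing with $\sum_m e_m(\mathbf{z}^n)\,t^m = \prod_i (1+z_i t)^n$ and using the homogeneity $e_m(-\mathbf{z}^n) = (-1)^m e_m(\mathbf{z}^n)$ yields
\[ h_m(\mathbf{z}^{-n}) \;=\; P_m(-n) \;=\; (-1)^m e_m(\mathbf{z}^n) \;=\; (-1)^m (\omega h_m)(\mathbf{z}^n). \]

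To bootstrap from $h_m$ to arbitrary $f$, I would use that $\Lambda = \mathbb{Q}[h_1,h_2,\ldots]$, so every $f \in \Lambda$ is a $\mathbb{Q}$-linear combination of products $h_\lambda := h_{\lambda_1}\cdots h_{\lambda_\ell}$. Because $\phi_n$ is a ring homomorphism for each $n \geq 0$, $f(\mathbf{z}^n)$ is built from the $h_m(\mathbf{z}^n)$ by sums and products, hence is a polynomial in $n$; this establishes the first assertion of the theorem and simultaneously shows that $\phi_n$ extends consistently to a ring homomorphism for all $n \in \mathbb{Z}$. For the reciprocity, if $f$ is homogeneous of degree $m$, write $f = \sum_{|\lambda|=m} c_\lambda h_\lambda$; then by multiplicativity
\[ h_\lambda(\mathbf{z}^{-n}) \;=\; \prod_{i} h_{\lambda_i}(\mathbf{z}^{-n}) \;=\; \prod_{i}(-1)^{\lambda_i} e_{\lambda_i}(\mathbf{z}^n) \;=\; (-1)^m (\omega h_\lambda)(\mathbf{z}^n), \]
using that $\omega$ is an algebra homomorphism with $\omega(h_{\lambda_i}) = e_{\lambda_i}$. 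Summing against $c_\lambda$ yields $f(\mathbf{z}^{-n}) = (-1)^m (\omega f)(\mathbf{z}^n)$.

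The only genuinely delicate step is the substitution $n \to -n$ inside the generating function in Step~4: one must treat $n$ as a formal variable so that the coefficient-wise substitution is well-defined, and then verify that substituting $-n$ into the polynomial $P_m(n)$ agrees with the coefficient of $t^m$ in $\prod_i(1-z_i t)^n$. This is precisely the content of the identity $\binom{-n}{j} = (-1)^j\binom{n+j-1}{j}$ and poses no real obstacle once stated carefully. Everything else is bookkeeping: the reduction to generators is immediate from the ring-homomorphism property of $\phi_n$, and the reciprocity identity for $h_m$ is essentially dual to the ur-reciprocity $\binom{-n}{k} = (-1)^k \mch{n}{k}$ applied coordinatewise.
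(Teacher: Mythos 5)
Your proof is correct, but it takes a genuinely different route from the paper. The paper verifies the theorem on the power-sum basis: since $p_m(\mathbf{z}^n) = n \cdot p_m(\mathbf{z})$, one gets $p_\lambda(\mathbf{z}^n) = n^{\ell(\lambda)} p_\lambda(\mathbf{z})$, which is visibly a polynomial in $n$, and the reciprocity then reduces in one line to the classical formula $\omega p_\lambda = (-1)^{|\lambda| - \ell(\lambda)} p_\lambda$. You instead verify the theorem on the multiplicative generators $h_m$ via the generating function $\prod_i (1-z_i t)^{-n}$ and the binomial reciprocity $\binom{-n+j-1}{j} = (-1)^j \binom{n}{j}$, and then extend by multiplicativity of the specialization map and of $\omega$ (the passage from identities valid for $n \geq 0$ to identities of polynomials, and hence to $n < 0$, is handled correctly). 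Your approach is somewhat longer but has two pleasant features: it makes the theorem a direct coordinatewise generalization of the prototypical reciprocity $\binom{-n}{k} = (-1)^k \mch{n}{k}$, and since the $h_\lambda$ form a $\mathbb{Z}$-basis of $\Lambda$ while the $p_\lambda$ only form a $\mathbb{Q}$-basis, your argument would survive restricting the coefficient ring to $\mathbb{Z}$. The paper's argument buys brevity at the cost of invoking $\omega p_\lambda = (-1)^{|\lambda|-\ell(\lambda)} p_\lambda$ and rational coefficients. (One cosmetic note: you refer to ``Step~4'' in your final paragraph, but your steps are never numbered.)
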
 

\Cref{thm:schur} is an immediate corollary of \cref{thm:sym} because Schur functions are symmetric functions and $\omega s_{\lambda/\mu} = s_{\lambda^t/\mu^t}$ (see \cite[Theorem~7.15.6]{stanley1999ec2}). In fact, the~$s_{\lambda}$ form a basis of $\Lambda$, so \cref{thm:schur} is more-or-less equivalent to \cref{thm:sym}.

As we mentioned, \cref{thm:sym} is a known result: it is one of Richard Stanley's supplementary exercises for Chapter~7 of \emph{Enumerative Combinatorics, Vol.~2}~\cite{stanley2022ec2supp}. Since the proof of this theorem is very short, we include it here.

\begin{proof}[Proof of \cref{thm:sym}]
For $m \geq 0$, let $p_m \in \Lambda$ denote the \dfn{$m$th power sum symmetric function}:
\[p_m(x_1,x_2,\ldots)= \sum_{i \geq 1} x_i^m. \]
And for a partition $\lambda=(\lambda_1,\lambda_2,\ldots)$, let $p_{\lambda}\coloneqq p_{\lambda_1} p_{\lambda_2} \cdots$. It is well known that the $p_{\lambda}$ form a basis of $\Lambda$ compatible with its decomposition into homogeneous components (see \cite[Corollary~7.7.2]{stanley1999ec2}). Therefore, it suffices to verify \cref{thm:sym} for the~$p_{\lambda}$.

It is clear that $p_m(\mathbf{z}^n) = n\cdot p_m(\mathbf{z})$ for any $m\geq 1$. Thus, $p_{\lambda}(\mathbf{z}^n) = n^{\ell(\lambda)} \cdot p_{\lambda}(\mathbf{z})$. So indeed $p_{\lambda}(\mathbf{z}^n)$ is a polynomial in $n$. 
 Moreover, $p_{\lambda}(\mathbf{z}^{-n})=(-1)^{\ell(\lambda)} \, p_{\lambda}(\mathbf{z}^n)$. And it is well known that $\omega p_{\lambda} = (-1)^{|\lambda|-\ell(\lambda)} \, p_{\lambda}$ (see~\cite[Proposition~7.7.5]{stanley1999ec2}). So indeed we have $p_{\lambda}(\mathbf{z}^{-n})=(-1)^{|\lambda|} \, (\omega p_{\lambda})(\mathbf{z}^{n})$. This completes the verification of \cref{thm:sym} for the~$p_{\lambda}$, and so the theorem is proved for all symmetric functions.
\end{proof}

In spite of the simplicity of this proof of \cref{thm:sym}, we also think the proof we gave above of \cref{thm:schur} using \cref{thm:main} was worthwhile because of its more combinatorial nature.

\begin{remark}
In this remark we discuss \dfn{$(P,\omega)$-partitions}, which we do not define. Consult \cite[\S3.15]{stanley2012ec1} for the relevant definitions. Let $(P,\omega)$ be any finite labeled poset. The order polynomial $\Omega_{(P,\omega)}(n)$ counts the number of $(P,\omega)$-partitions $\sigma\colon P \to [n]$ and, as mentioned in \cref{rem:order_poly}, we have the order polynomial reciprocity theorem $\Omega_{(P,\omega)}(-n) = (-1)^{\#P} \, \Omega_{(P,\overline{\omega})}(n)$. But we can also form the generating function
\[K_{(P,\omega)}(x_1,x_2,\ldots) \coloneqq \sum_{\sigma} \prod_{i \geq 1} x_i^{\#\{p\in P\colon \sigma(p)=i\}},\]
a sum over all $(P,\omega)$-partitions $\sigma\colon P \to \{1,2,\ldots\}$. For the labeled poset $(P,\omega)$ corresponding to a skew shape $\lambda/\mu$ we have $K_{(P,\omega)}=s_{\lambda/\mu}$. However, in general, $K_{(P,\omega)}$ is only a \dfn{quasisymmetric function} (see~\cite[\S7.19]{stanley1999ec2}). Nevertheless, it still makes sense to consider the specialization $K_{(P,\omega)}(\mathbf{z}) \in \mathbb{C}$ for any $\mathbf{z}\in\mathbb{C}^k$. We could hope for a reciprocity theorem for the repeated values evaluation $K_{(P,\omega)}(\mathbf{z}^n)$ which would recover the order polynomial reciprocity theorem in the case of $\mathbf{z}=1$. Such a reciprocity theorem does indeed exist, and in even more generality, as we explain in the next paragraph.

There is a direct generalization of \cref{thm:sym} to all quasisymmetric functions. The algebra of quasisymmetric functions also has a canonical involution $\omega$. Thus, the statement of \cref{thm:sym} makes sense if we replace ``symmetric function~$f\in \Lambda$'' with ``quasisymmetric function $f$,'' and with this change the statement remains true. In fact, for any graded \dfn{Hopf algebra} $A$ equipped with a character map $\zeta\colon A \to \mathbf{k}$ there is a reciprocity result for the ``repeated character evaluation'' $\zeta^{\otimes m} \circ \Delta^{m-1}$: see \cite[Proposition~7.1.7]{grinberg2014hopf}. Taking $\zeta$ to be the character on the Hopf algebra $\mathrm{QSym}$ of quasisymmetric functions which sends $f$ to $f(\mathbf{z})\in\mathbb{C}$ yields the desired extension of \cref{thm:sym}.

Finally, we note that another, related approach to proving the quasisymmetric generalization of \cref{thm:sym} is to extend the argument sketched in \cref{rem:kronecker}, using the ``internal coproduct'' of quasisymmetric functions as defined by Gessel in~\cite{gessel1984multipartite}.
\end{remark}

\bibliography{main}{}

\begin{thebibliography}{10}

\bibitem{aitken1954determinants}
A.~Aitken.
\newblock {\em Determinants and {M}atrices}.
\newblock Oliver and Boyd, Edinburgh, 9th edition, 1956.

\bibitem{ardila2015algebraic}
F.~Ardila.
\newblock Algebraic and geometric methods in enumerative combinatorics.
\newblock In {\em Handbook of enumerative combinatorics}, Discrete Math. Appl.
  (Boca Raton), pages 3--172. CRC Press, Boca Raton, FL, 2015.

\bibitem{beck2018combinatorial}
M.~Beck and R.~Sanyal.
\newblock {\em Combinatorial reciprocity theorems}, volume 195 of {\em Graduate
  Studies in Mathematics}.
\newblock American Mathematical Society, Providence, RI, 2018.

\bibitem{cigler2020bounded}
J.~Cigler and C.~Krattenthaler.
\newblock Bounded {D}yck paths, bounded alternating sequences, orthogonal
  polynomials, and reciprocity.
\newblock Preprint, \arxiv{2012.03878}, 2020.

\bibitem{ehrhart1967demonstration}
E.~Ehrhart.
\newblock D\'{e}monstration de la loi de r\'{e}ciprocit\'{e} du poly\`edre
  rationnel.
\newblock {\em C. R. Acad. Sci. Paris S\'{e}r. A-B}, 265:A91--A94, 1967.

\bibitem{fomin2001loop}
S.~Fomin.
\newblock Loop-erased walks and total positivity.
\newblock {\em Trans. Amer. Math. Soc.}, 353(9):3563--3583, 2001.

\bibitem{gessel1984multipartite}
I.~M. Gessel.
\newblock Multipartite {$P$}-partitions and inner products of skew {S}chur
  functions.
\newblock In {\em Combinatorics and algebra ({B}oulder, {C}olo., 1983)},
  volume~34 of {\em Contemp. Math.}, pages 289--317. Amer. Math. Soc.,
  Providence, RI, 1984.

\bibitem{gessel1989determinants}
I.~M. Gessel and X.~G. Viennot.
\newblock Determinants, paths, and plane partitions.
\newblock Preprint, available at
  \url{https://people.brandeis.edu/~gessel/homepage/papers/pp.pdf}, 1989.

\bibitem{grinberg2014hopf}
D.~Grinberg and V.~Reiner.
\newblock Hopf algebras in combinatorics.
\newblock Preprint, \arxiv{1409.8356}, 2014.

\bibitem{MO}
S.~Hopkins.
\newblock Reciprocity for fans of bounded {D}yck paths.
\newblock MathOverflow question, \url{https://mathoverflow.net/q/373030}, 2020.

\bibitem{jang2022negative}
J.~Jang, D.~Kim, J.~S. Kim, M.~Song, and U.-K. Song.
\newblock Negative moments of orthogonal polynomials.
\newblock Preprint, \arxiv{2201.11344}, 2022.

\bibitem{krattenthaler1999advanced}
C.~Krattenthaler.
\newblock Advanced determinant calculus.
\newblock {\em S\'{e}m. Lothar. Combin.}, 42:Art. B42q, 67, 1999.
\newblock The Andrews Festschrift (Maratea, 1998).

\bibitem{kuperberg2002kasteleyn}
G.~Kuperberg.
\newblock Kasteleyn cokernels.
\newblock {\em Electron. J. Combin.}, 9(1):Research Paper 29, 30, 2002.

\bibitem{lindstrom1973vector}
B.~Lindstr\"{o}m.
\newblock On the vector representations of induced matroids.
\newblock {\em Bull. London Math. Soc.}, 5:85--90, 1973.

\bibitem{macdonald1971polynomials}
I.~G. Macdonald.
\newblock Polynomials associated with finite cell-complexes.
\newblock {\em J. London Math. Soc. (2)}, 4:181--192, 1971.

\bibitem{proctor1988odd}
R.~A. Proctor.
\newblock Odd symplectic groups.
\newblock {\em Invent. Math.}, 92(2):307--332, 1988.

\bibitem{propp2001reciprocity}
J.~Propp.
\newblock A reciprocity theorem for domino tilings.
\newblock {\em Electron. J. Combin.}, 8(1):Research Paper 18, 9, 2001.

\bibitem{speyer2001reciprocity}
D.~Speyer.
\newblock A reciprocity sequence for perfect matchings of linearly growing
  graphs.
\newblock Preprint, available at
  \url{http://www-personal.umich.edu/~speyer/TransferMatrices.pdf}, c.~2001.

\bibitem{stanley1974combinatorial}
R.~P. Stanley.
\newblock Combinatorial reciprocity theorems.
\newblock {\em Advances in Math.}, 14:194--253, 1974.

\bibitem{stanley1980differentiably}
R.~P. Stanley.
\newblock Differentiably finite power series.
\newblock {\em European J. Combin.}, 1(2):175--188, 1980.

\bibitem{stanley1986symmetries}
R.~P. Stanley.
\newblock Symmetries of plane partitions.
\newblock {\em J. Combin. Theory Ser. A}, 43(1):103--113, 1986.

\bibitem{stanley1999ec2}
R.~P. Stanley.
\newblock {\em Enumerative combinatorics. {V}ol. 2}, volume~62 of {\em
  Cambridge Studies in Advanced Mathematics}.
\newblock Cambridge University Press, Cambridge, 1999.

\bibitem{stanley2012ec1}
R.~P. Stanley.
\newblock {\em Enumerative combinatorics. {V}olume 1}, volume~49 of {\em
  Cambridge Studies in Advanced Mathematics}.
\newblock Cambridge University Press, Cambridge, second edition, 2012.

\bibitem{stanley2022ec2supp}
R.~P. Stanley.
\newblock Supplementary exercises for {C}hapter 7 of \emph{Enumerative
  Combinatorics, Vol. 2}.
\newblock Exercises available at
  \url{https://math.mit.edu/~rstan/ec/ch7supp.pdf} and solutions available at
  \url{https://math.mit.edu/~rstan/ec/ch7suppsol.pdf}, 2022.

\bibitem{straubing1983combinatorial}
H.~Straubing.
\newblock A combinatorial proof of the {C}ayley-{H}amilton theorem.
\newblock {\em Discrete Math.}, 43(2-3):273--279, 1983.

\bibitem{talaska2008formula}
K.~Talaska.
\newblock A formula for {P}l\"{u}cker coordinates associated with a planar
  network.
\newblock {\em Int. Math. Res. Not. IMRN}, pages Art. ID rnn 081, 19, 2008.

\bibitem{talaska2012determinants}
K.~Talaska.
\newblock Determinants of weighted path matrices.
\newblock Preprint, \arxiv{1202.3128}, 2012.

\end{thebibliography}
\bibliographystyle{abbrv}

\end{document}